\theoremstyle{plain}
\theoremstyle{definition}\newtheorem{theorem}{Theorem}[section]
\theoremstyle{plain}
\theoremstyle{plain}
\theoremstyle{plain}
\theoremstyle{remark}\newtheorem{remark}{Remark}[section]
\newcommand{\Div}{\mathrm{div}\,}
\newcommand{\bv}{{\bm v}}
\newcommand{\bx}{{\bm x}}
\newcommand{\varep}{\varepsilon}
\newcommand{\bl}{{\mbox{\boldmath $\ell$}}}
\newcommand{\bn}{\hat{{\mbox{\boldmath $\ell$}}}}
\newcommand{\bomega}{{\mbox{\boldmath $\omega$}}}
\newcommand{\B}{\Big}
\newcommand{\s}{\mathrm{div}}
\newcommand{\be}{\begin{equation}}
\newcommand{\ee}{\end{equation}}
 \newcommand{\ba}{\begin{aligned}}
 \newcommand{\ea}{\end{aligned}}
  \newcommand{\f}{\frac}
  \newcommand{\ben}{\begin{enumerate}}
   \newcommand{\een}{\end{enumerate}}
\newcommand{\Rmnum}[1]{\expandafter\@slowromancap\romannumeral #1@}
\numberwithin{equation}{section}
\begin{document}
\title{
Four-fifths   laws   in     incompressible and magnetized fluids: Helicity, Energy and Cross-helicity }
\author{Yulin Ye\footnote{School of Mathematics and Statistics,
		Henan University,
		Kaifeng, 475004,
		P. R. China. Email: ylye@vip.henu.edu.cn},~   ~Yanqing Wang\footnote{  Corresponding author.  College of Mathematics and   Information Science, Zhengzhou University of Light Industry, Zhengzhou, Henan  450002,  P. R. China Email: wangyanqing20056@gmail.com} ~   and  Otto Chkhetiani\footnote{A. M. Obukhov Institute of Atmospheric Physics, Russian Academy of Sciences, Pyzhevsky per. 3, Moscow, 119017, Russia.  Email: ochkheti@ifaran.ru}  }
\date{}
\maketitle
\begin{abstract}
In this paper, we are concerned with  the Kolmogorov's scaling laws of conserved quantities. By means of Eyink's  longitudinal structure functions and the analysis of  interaction of different physical quantities, we extend celebrated
four-fifths   laws  from energy  to helicity in incompressible fluid and, energy and cross-helicity in magnetohydrodynamic
flow. In   contrast to pervious
  4/5 laws of energy and cross-helicity  in magnetized fluids obtained by Politano and  Pouquet, they are in terms of  the mixed  three-order structure functions
rather than the     structure  coupling correlation functions.

  \end{abstract}
\noindent {\bf MSC(2020):}\quad 76F02, 35Q31, 76B03, 76W05 \\\noindent
{\bf Keywords:} 4/5  law;  helicity; energy; cross-helicity; incompressible magnetohydrodynamic fluid  
\section{Introduction}
\label{intro}
\setcounter{section}{1}\setcounter{equation}{0}
The fluid contribution of
 Kolmogorov in \cite{[Kolmogorov1],[Kolmogorov2],[Kolmogorov3]}    (the so-called K41 theory)   is a milestone in the context of  turbulence. In a  seminal paper \cite{[Kolmogorov3]}, under the homogeneity, isotropy and the finiteness of the energy dissipation, Kolmogorov presented the following four-fifths law in terms of third-order longitudinal structure function
\be\label{Kolmogorov45law}
\langle[\delta \bv_{L}(\bm{r})]^{3} \rangle=-\f45\epsilon \bm{r},
 \ee
 where $\epsilon$ is the mean rate of
  kinetic energy dissipation per unit mass of the Navier-Stokes equations with sufficiently high Reynolds numbers. Here, $\delta \bv_{L}(\bm{r})=\delta \bv (\bm{r})\cdot\f{\bm{r}}{|\bm{r}|}=[\bv(x+\bm{r})-\bv(x)]\cdot\f{\bm{r}}{|\bm{r}|}$ stands for the longitudinal velocity increment and $\langle\cdot\rangle$ denotes the mean value.
  The derivation of \eqref{Kolmogorov45law} in \cite{[Kolmogorov3]}
 relies on the famous  K\'arm\'arth-Howarth equations in \cite{[KH]}.
  The
   Kolmogorov' 4/5 law \eqref{Kolmogorov45law}  has attracted considerable attention   in  turbulence  theory, experiment and numerical simulation (see e.g. \cite{[Chkhetiani2],[Frisch],[KFD],[Eyink4],[Eyink1],[Galtier2],[AB],[AOAZ],[YRS],
  [BCPW],[Drivas],[HPZZ],[MY],[Novack],[Lindborg]}).
The modern rigorous  derivation of
Kolmogorov' 4/5 law \eqref{Kolmogorov45law} can be found in \cite{[Frisch],[Eyink4],[Eyink1]}. In particular,
Eyink   derived the
  local
 version of Kolmogorov's  $4/5$ law \eqref{Kolmogorov45law} below without assumptions of homogeneity and isotropy,
 \be
 S_{L}(\bm{v})=-\f45D(\bm{v}),
 \ee
where $$S_{L}(\bm{v})=\lim\limits_{\lambda\rightarrow0}S_{L}(\bm{v},\lambda)=
\lim\limits_{\lambda\rightarrow0}\f{1}{\lambda}\int_{\partial B } \bl \cdot\delta \bm{v}(\lambda\bl) [\delta \bm{v}_{L}(\lambda\bl)]^{2}\f{d\sigma(\bl) }{4\pi}$$
and
$$
D(\bv)=\lim\limits_{\varepsilon\rightarrow0}\f14
\int_{\mathbb{T}^{3}}\nabla\varphi_{\varepsilon}(\bl)\cdot\delta \bm{v}(\bl)[\delta \bm{v}_{L}(\bl)]^{2}d\bl,$$
in \cite{[Eyink1]}. Here, $\sigma(x)$ stands for the surface measure on the sphere $\partial B=\{x\in \mathbb{R}^{3}: |x|=1\}$ and $\varphi$ is  some smooth non-negative function  supported in $\mathbb{T}^{3}$ with unit integral and $\varphi_{\varepsilon}(x)=\varepsilon^{-3}\varphi(\f{x}{\varepsilon})$.
To this end, Eyink defined the
 longitudinal and transverse velocity   increments as follows
\be\label{eyinksp}
\ba
&\delta \bv_{L}(\bx, t,\bl)=(\bn\otimes \bn)\delta \bv(\bx,t,\bl),~~~~\delta \bv_{T}(\bx, t,\bl)=({\bf 1}-\bn\otimes \bn)\delta \bv(\bx,t,\bl),
\ea\ee
 and invoked the dissipation term $D(v)$  introduced  by
Duchon and Robert in \cite{[DR]}. It is worth pointing out that Duchon and Robert also deduced the following  4/3 law
$$
S(\bv)=-\f43D(\bv)
$$
for the energy of the Euler equations, which corresponds to
\be\label{Yaglom}
\langle \delta \bv_{L}(\bm{r})[\delta \bv (\bm{r})]^{2}  \rangle=-\f43\epsilon \bm{r}.
\ee
This kind  of four-thirds law  originated from Yaglom's work \cite{[Yaglom],[MY]} and was developed in \cite{[MY]}.  The dissipation term $D(\bv)$ was applied   by Duchon and Robert to
study the  Onsager conjecture concerning the minimal regularity of weak solutions for keeping energy conservation of the Euler equations in \cite{[DR]}. In this direction, regarding the energy conservation of weak solutions for the turbulence models, we refer the readers to \cite{[BGSTW],[CCFS],[CET],[DS1],[NV],[DS2],[DR],
[Eyink0],[FW],[GKN],[Isett],[WWWY]}.


The character of turbulence rests on not only the non-linear cascades of energy, but also the conserved quantities in three dimensions (see \cite{[AB]}). As known to all, besides the energy $\f12\int_{\mathbb{T}^3} |\bv|^2 dx$, the helicity $ \f12\int_{\mathbb{T}^3} {\bm \omega}\cdot {\bm v} ~dx$ is another
 quadratic conserved   quantity, where the vorticity ${\bm \omega}=\nabla\times {\bm v}$. The  concept of  helicity was first proposed  by   Moffatt  in \cite{[Moffatt]},  which is important at a fundamental level in relation to flow kinematics because it
admits topological interpretation in relation to the linkage or linkages of vortex lines of the flow.   The interaction of the transfer of energy and helicity  in incompressible flow was considered by Chen, Chen and Eyink in \cite{[CCE]}, and, Chen, Chen, Eyink and Holm in \cite{[CCEH]}. For this second invariant quantity, its
  first exact law   was due to Chkhetiani in \cite{[Chkhetiani]}, where the derivation yielded the so-called 2/15 law
\be\label{1.7}
   \langle  \delta \bv_{L}(\bx+{\bm{r}})(\bv_{T}(\bx+{\bm{r}})\times \bv_{T}(\bx) )\rangle =\f{2}{15}h \bm{r}^{2}\epsilon_{H},~~~ \bv_{T}=\bv- \bv_{L},
\ee
 where $\epsilon_{H}$ represents the helicity dissipation rates.
For the 2/15-law \eqref{1.7}, see also  \cite{[Kurien],[KTM]}. Subsequently,
Gomez, Plitano and Pouquet \cite{[GPP]} presented
the following Yaglom type four-thirds law of helicity
via the third-order structure function
\be\label{1.20}
   \langle\delta \bv_{L}({\bf r})[\delta \bv({\bm{r}})\cdot\delta \bomega({\bm{r}})| \rangle -\f12\langle\delta \bomega_{L}({\bm{r}})[\delta \bv({\bm{r}}) ]^{2}  \rangle=-\f43\epsilon_{H} \bm{r}.
\ee  However, to the best of our knowledge, there is no  analogues 4/5 law for helicity in Navier-Stokes or Euler equations. Hence,   a natural question arises that weather or not the Kolmogorov's 4/5  law \eqref{Kolmogorov45law} for the energy can be extended to
 helicity in incompressible fluid. This is our main motivation of the current paper.  Actually, compared with the  4/5 law for
   the energy
   based on the velocity only, it seems that this type law for the helicity requires  the mixed moments of the velocity  and vorticity. This destroys    the symmetry in some sense, consequently, it becomes more complicated to derive
  4/5 law for the
 velocity coupling other quantity  (see \eqref{key identity}).   It also occurs  in magnetized fluid (see \cite[p. 148]{[Biskamp]}), where there exist the nonlinear terms based on the velocity field and the   magnetic field.
In what follows, we invoke Eyink's decomposition \eqref{eyinksp}
  that one splits  the  quantity  into longitudinal   and transverse
parts.
Firstly,  we state our first result on scaling laws for helicity in incompressible fluids.
\begin{theorem}\label{the01}
 Suppose that the pair $(\bv, \bomega)$ satisfy
  the following 3D Euler  equations
\be\label{NS}\left\{\begin{aligned}
	&\bv_t+\s (\bv\otimes \bv)+\nabla P=0,\\
	&\bomega_t+\bv\cdot\nabla \bomega -\bomega\cdot\nabla \bv=0,\\
	&\s \bv=\s \bomega=0,
\end{aligned}\right.
\ee
where $\bv$ and $P$ stand for the velocity and pressure of the
fluid, respectively.
Then, there hold the following local longitudinal and transverse K\'arm\'arth-Howarth equations for helicity, respectively
\be\label{a2}\ba
&\partial_t(\bomega\cdot \bv_L^\varep+\bv\cdot \bomega_L^\varep)+\s (\bomega P_L^\varep+P\bomega_L^\varep) +\s \B[\bv(\bomega\cdot\bv_L^\varep)-\bomega(\bv\cdot\bv_L^\varep)+\bv(\bv\cdot\bomega_L^\varep)\B]\\
&+\s \B[\big(\bv(\bomega_L\cdot \bv_L)\big)^\varep-\bv(\bomega_L\cdot \bv_L)^\varep\B]+\f12 \s\B[\bomega (\bv_L\cdot \bv_L)^\varep-\big(\bomega(\bv_L\cdot\bv_L)\big)^\varep\B]\\
=&-\f43 D_{HL}^\varep(\bv,\bomega),
\ea\ee
and
\be\label{a3}\ba\partial_t(\bomega\cdot &\bv_T^\varep+\bv\cdot \bomega_T^\varep)+\s (\bomega P_T^\varep +P\bomega_T^\varep) +\s\B[\bv (\bomega\cdot \bv_T^\varep)+\bv(\bv\cdot\bomega_T^\varep)-\bomega(\bv\cdot\bv_T^\varep)\B]\\
&+\s\B[\big(\bv(\bv_T\cdot\bomega_T)\big)^\varep-\bv(\bv_T\cdot\bomega_T)^\varep\B]
+\f12\s\B[\bomega(\bv_T\cdot\bv_T)^\varep-\big(\bomega(\bv_T\cdot\bv_T)\big)^\varep\B]\\
=&-\f83D_{HT}^\varep(\bv,\bomega),
\ea\ee
where the dissipation terms (K\'arm\'an-Howarth-Monin type relation) are defined by
\begin{equation}\label{a4}\begin{aligned}
&D_{HL}^\varep(\bv,\bomega)\\
=&\f34\int_{\mathbb{T}^{3}} \nabla \varphi^\varep(\ell)\cdot \delta \bv(\delta \bv_L \cdot \delta \bomega_L)+\f{2}{|\bl|}\varphi^\varep(\ell)\bn\cdot \B[\delta \bv(\delta \bv_T\cdot \bomega_T)+\delta \bv\times (\delta \bomega \times \delta \bv)\B] d^3 \bl\\
&-\f{3}{8} \int_{\mathbb{T}^{3}} \nabla \varphi^\varep(\ell)\cdot \delta \bomega [\delta \bv_{L}]^2+\f{2}{|\bl|}\varphi^\varep(\ell)\bn\cdot \delta \bomega [\delta \bv_T]^2d^3\bl,
\end{aligned}\end{equation}
and
\begin{equation}\label{a5}\begin{aligned}
		&D^\varepsilon_{HT}(\bv,\bomega)\\
		=&\f38\int\nabla \varphi^\varep(\ell)\cdot \delta\bv(\delta \bv_T\cdot \delta \bomega_T)-\f{2}{|\bl|}\varphi^\varep(\ell)\bn\cdot\B[\delta \bv(\delta \bv_T\cdot \delta \bomega_T)+\delta \bv\times (\delta \bomega \times \delta \bv)\B]d^3\bl\\
		&-\f{3}{16}\int \nabla \varphi^\varep\cdot \delta \bomega[\delta \bv_T]^2-\f{2}{|\bl|}\varphi^\varep(\ell)\bn\cdot\delta \bomega[\delta \bv_T]^2 d^3\bl.
\end{aligned}\end{equation} 	
In addition, if suppose that for any $1<m,n<\infty, 3\leq p,q<\infty$ with $\f{2}{p}+\f{1}{m}=1$ and $\f{2}{q}+\f{1}{n}=1$ such that $(\bv, \bomega)$ satisfies
	\be\label{a8}
	\bv\in L^p(0,T;L^q(\mathbb{T}^3))~\text{and}~\bomega\in L^m(0,T;L^n(\mathbb{T}^3)).
	\ee	
	Then the function $D_{HX}^\varep(\bv,\bomega)$ with $X=L~\text{or}~T$ converges to a distribution $D_{H}(\bv,\bomega)$ in the sense of distributions as $\varep\to 0$, and $D_{H}(\bv,\bomega)$ satisfies the local equation of helicity
	\be\label{a9}
	\f12\partial_t (\bv\cdot\bomega)+\f12\s \B[\bv(\bv\cdot\bomega)-\f{1}{2}\bomega(|\bv]^2)+P\bomega\B]=-D_{H}(\bv,\bomega),
	\ee
Furthermore, we have the following 4/5 exact relation for the helicity
\be\label{45helicity}
S_{HL} (\bv,\bomega)= -\f45 D_{H}(\bv,\bomega),
\ee
and 8/15 law
$$S_{HT}(\bv,\bomega)= -\f{8}{15} D_{H}(\bv,\bomega),
$$
where $$S_{HX}(\bv,\bomega)= \lim\limits_{\lambda\to 0}S_{HX}(\bv,\bomega,\lambda),~\text{with}~X=L,T,$$
and
$$\ba
S_{HL}(\bm{v},\bomega,\lambda )
=  &
\f{1}{\lambda}\int_{\partial B } \bn \cdot \B[  \delta \bv(\lambda\bl)\B[\delta \bv_L(\lambda\bl)\cdot \delta \bomega_L(\lambda\bl)\B]-\f12\delta \bomega(\lambda\bl)[\delta \bv_L(\lambda\bl)]^2\B]\f{d\sigma(\bl) }{4\pi}
\\&-\f25\f{1}{\lambda}\int_{\partial B } \bn \cdot \B[\delta \bomega(\lambda\bl) \B[\delta \bv(\lambda\bl)\cdot \delta \bv(\lambda\bl)\B] -\delta \bv(\lambda\bl) \B(\delta \bomega(\lambda\bl) \cdot \delta \bv(\lambda\bl)\B)\B] \f{d\sigma(\bl) }{4\pi},\\
S_{HT}(\bm{v},\bomega,\lambda ) =&
\f{1}{\lambda}\int_{\partial B } \bn \cdot \B[  \delta \bv(\lambda\bl)\B[\delta \bv_T(\lambda\bl)\cdot \delta \bomega_T(\lambda\bl)\B]-\f12\delta \bomega(\lambda\bl)[\delta \bv_T(\lambda\bl)]^2\B] \f{d\sigma(\bl) }{4\pi} \\&+\f25\f{1}{\lambda}\int_{\partial B } \bn \cdot \B[\delta \bomega(\lambda\bl) \B(\delta \bv(\lambda\bl)\cdot \delta \bv(\lambda\bl)\B) -\delta \bv(\lambda\bl) \B(\delta \bomega(\lambda\bl) \cdot \delta \bv(\lambda\bl)\B)\B] \f{d\sigma(\bl) }{4\pi}.
\ea$$
\end{theorem}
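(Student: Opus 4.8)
\section*{Proof proposal}

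The plan is to adapt the mollification method of Duchon and Robert to the coupled system \eqref{NS} and to the mixed density $\bv\cdot\bomega$, and then to convert the resulting defect into Eyink's longitudinal and transverse structure functions. First I would establish the pointwise helicity balance for smooth solutions. Contracting the momentum equation in \eqref{NS} with $\bomega$ and the vorticity equation with $\bv$ and adding, the two advective terms combine into $\s[\bv(\bv\cdot\bomega)]$ and the pressure into $\s(\bomega P)$ because $\s\bv=\s\bomega=0$, while the vortex-stretching term contributes $-\bv\cdot(\bomega\cdot\nabla)\bv=-(\bomega\cdot\nabla)\f{|\bv|^2}{2}=-\s\B(\bomega\f{|\bv|^2}{2}\B)$; this produces the smooth form of \eqref{a9}. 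To treat weak solutions I would mollify every field, set $\bv^\varep=\varphi^\varep*\bv$ and so on, and track the commutators between convolution and the quadratic and cubic nonlinearities; following Duchon--Robert, each commutator is rewritten as an integral of $\nabla\varphi^\varep(\bl)$ against products of increments, which is the origin of the candidate dissipation.

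Next I would insert Eyink's decomposition $\delta\bv=\delta\bv_L+\delta\bv_T$ and $\delta\bomega=\delta\bomega_L+\delta\bomega_T$ into these increment integrals. Isolating the longitudinal contributions produces \eqref{a2} with the defect \eqref{a4}, and the transverse contributions give \eqref{a3} with \eqref{a5}. This reorganization is the most delicate step: because the projector $\bn\otimes\bn$ depends on $\bl$, letting $\nabla_\bl$ fall on it generates the boundary-type terms $\f{2}{|\bl|}\varphi^\varep(\ell)\bn\cdot(\cdots)$, and because the helicity flux couples $\bv$ with $\bomega$, the cubic term lacks the full permutation symmetry of the pure-energy triple product, so the cross terms $\delta\bv\times(\delta\bomega\times\delta\bv)$ must be carried along explicitly.

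Under the integrability hypothesis \eqref{a8} with $\f2p+\f1m=1$ and $\f2q+\f1n=1$, I would bound $D_{HX}^\varep$ by H\"older's inequality: every summand is a product of two velocity increments and one vorticity increment, so the spatial exponents close exactly through $\f2q+\f1n=1$ and the temporal ones through $\f2p+\f1m=1$. A Constantin--E--Titi type estimate then shows the mollification error vanishes and $D_{HX}^\varep$ converges in the sense of distributions to a single limit $D_H$ independent of the channel $X$; passing to the limit in \eqref{a2}--\eqref{a3} yields \eqref{a9}.

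Finally, to reach \eqref{45helicity} and the $8/15$ law I would take $\varphi$ radial, write $\nabla\varphi^\varep(\bl)=\f{\bl}{|\bl|}(\varphi^\varep)'(|\bl|)$, and reduce the volume integral defining $D_H$ to a radial integral of the spherical averages $S_{HX}(\bv,\bomega,\lambda)$ as $\lambda\to0$. Carrying out the angular integrations with $\int_{\partial B}\bn_i\bn_j\,\f{d\sigma}{4\pi}=\f13\delta_{ij}$ and $\int_{\partial B}\bn_i\bn_j\bn_k\bn_l\,\f{d\sigma}{4\pi}=\f{1}{15}(\delta_{ij}\delta_{kl}+\delta_{ik}\delta_{jl}+\delta_{il}\delta_{jk})$, the longitudinal channel should collapse onto $-\f45D_H$ and the transverse channel onto $-\f{8}{15}D_H$; the $\f25$-weighted correction terms built into $S_{HL}$ and $S_{HT}$ are precisely what absorbs the mixed $\delta\bv$--$\delta\bomega$ angular moments. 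The main obstacle throughout is this broken-symmetry bookkeeping of mixed moments, which surfaces both in the $\f{1}{|\bl|}$ terms of \eqref{a4}--\eqref{a5} and in forcing both channels to land on the same $D_H$ with the stated constants.
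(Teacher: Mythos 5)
Your outline reproduces the paper's large-scale architecture (a local balance law with a mollification defect, limit passage under \eqref{a8}, then a radial reduction to spherical averages), and several ingredients are sound: the smooth helicity balance in your first paragraph is correct, the H\"older bookkeeping with $\f2q+\f1n=1$, $\f2p+\f1m=1$ matches the paper's Step 2, and you correctly sense that the $\f25$-weighted terms in $S_{HL}$, $S_{HT}$ exist to absorb the mixed $\delta\bv$--$\delta\bomega$ contributions. However, there is a genuine gap at the central step. Equations \eqref{a2}--\eqref{a3} cannot be obtained by mollifying the fields in the Duchon--Robert way ($\bv^\varep=\varphi^\varep*\bv$) and then ``inserting'' the decomposition $\delta\bv=\delta\bv_L+\delta\bv_T$ into the resulting commutator integrals. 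The left-hand sides of \eqref{a2}--\eqref{a3} contain $\bv_L^\varep$, $\bomega_L^\varep$, $P_L^\varep$, and these are \emph{not} projections of $\varphi^\varep*\bv$: they are mollifications against the tensor kernel $\varphi^\varep(\bl)\,\bn\otimes\bn$, i.e. $\bv_L^\varep(\bx)=\int_{\mathbb{T}^3}\varphi^\varep(\bl)(\bn\otimes\bn)\bv(\bx+\bl)\,d^3\bl$. Deriving their evolution equations is Eyink's tensor-mollification step, and it requires two facts your plan never supplies: that $\s {\bf \Pi}^\varep_L=\nabla P_L^\varep$ for a scalar $P_L^\varep$ built from the modified kernel $\varphi_L$ of \eqref{c10} (which has a logarithmic singularity at the origin), and that $\s\bv_L^\varep=\s\bomega_L^\varep=0$ in the sense of distributions. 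Without these, the pressure and advection terms cannot be cast in the divergence form appearing in \eqref{a2}--\eqref{a3} at all.

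The reason this matters is quantitative, not cosmetic. Scalar Duchon--Robert mollification yields exactly \emph{one} balance law, whose defect is an integral of $\nabla\varphi^\varep$ against full increments; splitting those increments into $L$ and $T$ parts merely rewrites that single equation, and after the radial reduction it produces a single linear relation among the three spherical averages $\bar S_{HL}$, $\bar S_{HT}$, $\bar S_{H}$ --- which is the already-known $4/3$ law \eqref{1.20}, not the $4/5$ law. To isolate $S_{HL}$ one needs \emph{two} independent relations, and in the paper these come precisely from the two distinct projected mollifications $X=L$ and $X=T$, whose defects $D^\varep_{HL}$ and $D^\varep_{HT}$ both converge to the same $D_H$; the $4/5$ and $8/15$ laws then follow by solving the $2\times2$ linear system \eqref{2.59}. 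Moreover, because the cubic terms here mix $\bv$ and $\bomega$, converting the projected commutator terms into divergence-plus-defect form forces one to compare $\f{\partial}{\partial_{\ell_k}}(\bn_i\bn_j)$ with $(\f{\partial \bn_i}{\partial\ell_j}+\f{\partial \bn_j}{\partial\ell_i})\bn_k$ on triples of \emph{distinct} fields; the discrepancy is exactly the paper's new identity \eqref{2.27}, whose right-hand side is the $\delta\bv\times(\delta\bomega\times\delta\bv)$ correction you mention. Eyink's original identities, valid only when all three fields coincide, break down here, and your proposal names the symptom (broken symmetry of mixed moments) but provides no mechanism replacing \eqref{2.27} to handle it.
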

\begin{remark}
It is noted that the 4/5 law  \eqref{45helicity} corresponds to the
   following four-fifths law for longitudinal third-order moments
\be\label{1.15}
\langle \delta \bv_{L} (\delta \bv_{L} \cdot\delta \bomega_{L} )  \rangle- \f12\langle \delta \bomega_{L} (\delta \bv_{L} )^{ 2}  \rangle  -\f25\langle  \delta\bomega_{L}(\delta\bv)^{2}\rangle + \f25\langle  \delta\bv_{L}(\delta\bv\cdot\delta\bomega)\rangle =-\f45\epsilon_{H} \bm{r}.
\ee

\end{remark}\begin{remark}
	The 4/5 law and 8/15 law for helicity in Theorem \ref{the01} rely on the assumption that the limits of $S_{HX}(\bv,\bomega, \lambda)$ with $X=L,T$ exist as $\lambda \to 0$. Actually, this assumption can be removed by following the strategy due to  Novack \cite{[Novack]}, in which the author considered the   validity of certain exact results from turbulence theory in
	the deterministic setting. We leave this to the interested reader.
\end{remark}
\begin{remark}
A well known helicity criterion for the incompressible Euler equations obtained in \cite{[CCFS]} is that the helicity of weak solutions  is conserved provided that  $v\in L^3(0,T;B^{\f23}_{3,q^{\natural}})$ with $1\leq q^{\natural}<\infty.$ Hence, if $m\geq3$ in \eqref{a8}, we require $n<9/4$ in this theorem. Since a special case of \eqref{a8} is $p=m=3$, $q=\f92$ and $n=\f95$, the condition \eqref{a8} is  no empty.
\end{remark}
\begin{remark}\label{rem1.4}
Thanks to the  vector triple product formula
$$
\bm{A}\times(\bm{B}\times \bm{C})=\bm{B}(\bm{A}\cdot\bm{C})-\bm{C}(\bm{A}\cdot \bm{B}),~\text{for\ any\ vectors\ }\bm{A},\bm{B}~\text{and}\ \bm{C},
$$
one may reformulate $S_{HX}(\bv,\bomega)$ by  $\delta \bv\times (\delta \bomega \times \delta \bv)$ in this theorem.
\end{remark}
 The new ingredient   in this theorem is two-fold. First, we prove new exact laws   and generalize  famous Kolmogorov's 4/5 law  \eqref{Kolmogorov45law} from energy to helicity for the incompressible flow. Second, we present the effect of mixed moments in four-fifths laws
  in terms of longitudinal third order.   The interaction of different quantities brings more difficulty to derive a scaling relation analogous to Kolmogorov's 4/5 law (see \cite[p. 148]{[Biskamp]}). In Eyink's framework, it seems that two  identities \eqref{c18} and \eqref{c19} in \cite{[Eyink1]}   break  down for the velocity coupling with other different quantities.   To over this difficulty, we are trying to find the accurate relation between $\f{\partial}{\partial_{\ell_k}}\B(\bn_i\bn_j\B)$ and $\B(\f{\partial \bn_i}{\partial\ell_j}+\f{\partial \bn_j}{\partial\ell_i}\B)\bn_k$ involving different variable quantities (see \eqref{2.27}), which  are equivalent in deriving the 4/5 law of energy just involving the velocity field quantity in the incompressible fluids.
To this end, we  deduce two modified    identities
\eqref{2.26} and \eqref{2.44}  allowing different quantities, which reduce to Eyink's identities when the different quantities are the same.  The starting point is that the following new identity
\be\label{key identity}\ba
&\B[\f{\partial}{\partial_{\ell_k}}\B(\bn_i\bn_j\B)-\B(\f{\partial \bn_i}{\partial\ell_j}+\f{\partial \bn_j}{\partial\ell_i}\B)\bn_k\B]A_kB_iC_j\\
=&\f{1}{|\bl|}\bn\cdot\B[{\bf C}({\bf A}\cdot {\bf B})+{\bf B}({\bf A}\cdot {\bf C})-2{\bf A}({\bf B}\cdot {\bf C})\B]
\ea\ee
 is observed, which immediately becomes zero when ${\bf A}={\bf B}= {\bf C}.$

Furthermore, in light of the key observation \eqref{key identity}, we can deal with other scaling laws involving different variable quantities. Next, we turn our attention to the incompressible magnetohydrodynamic
fluid, in which there exist the nonlinear terms involving both the
the
velocity $\bv$ and the magnetic field ${\bm h}$. Two 4/5 laws for total energy and cross-helicity obtained by    Politano and  Pouquet in the  magnetized fluids  in \cite{[PP2]} can be written as
\be\label{pp1}\ba
&\langle[\delta \bv_{L}({\bm r})]^{3} \rangle- 6\langle {\bm h}^{2}_{L}\delta \bv_{L}({\bm r})  \rangle=-\f45\epsilon_{N} \bm{r},\\
&\langle[\delta {\bm h }_{L}({\bm r})]^{3} \rangle- 6\langle {\bm h}^{2}_{L}\delta \bv_{T}({\bm r})  \rangle=-\f45\epsilon_{C} \bm{r},
\ea
\ee
where $\epsilon_{N}$ and $\epsilon_{C}$ stand for total energy  and cross-helicity dissipation rates, respectively (see also \cite{[Chandrasekhar]}). In another contemporary paper \cite{[PP1]},  Politano and  Pouquet  established the Yaglom scaling laws for energy and cross-helicity. Indeed, it is very useful to invoke these scaling laws to model the energy flux in turbulent plasmas. We refer the reader to  \cite{[MS]} for a  detailed review of scaling laws for the energy transfer in space plasma turbulence  recently provided by Marino and Sorriso-Valvo. However, notice    that the second term    in $\eqref{pp1}_{1}$  and $\eqref{pp1}_2$ are  $\langle {\bm h}^{2}_{L}\delta \bv_{L}({\bf r})  \rangle$ and $\langle {\bm h}^{2}_{L}\delta \bv_{T}({\bf r})  \rangle$  based on  the
structure and correlation functions rather than  $\langle\delta {\bm h}^{2}_{L}\delta \bv_{L}({\bf r})  \rangle$  and $\langle\delta {\bm h}^{2}_{L}\delta \bv_{T}({\bf r})  \rangle$ via the
structure functions only. Hence, our second objective is to revisit
Kolmogorov type scaling laws \eqref{pp1} in plasma turbulence.
In the spirit of above theorem, we formulate 4/5 laws of total
energy and cross-helicity in the inviscid  magnetohydrodynamic flow.
\begin{theorem}\label{the1.2}
 Suppose that the   triplet $(\bm{v},\bm{h}, \Pi)$  satisfy the following
\be\label{MHD}\left\{\ba
&\partial_{t} \bm{v}  +\text{div} (\bm{v}\otimes \bm{v} )  -\text{div} (\bm{h}\otimes \bm{h} )  + \nabla\Pi =0, \\
&\partial_{t} \bm{h} +\text{div} (\bm{v}\otimes \bm{h} )   -\text{div} (\bm{ h}\otimes \bm{ u} ) =0, \\
&\Div \bm{v} =\Div \bm{h} =0,
 \ea\right.\ee
where $\bv$, $\bm{h}$ and $\Pi$ stand for the veolcity field, magnetic field and the pressure of the fluid, respectively. Then, there hold the following local longitudinal and transverse K\'arm\'arth-Howarth equations for energy,
  $$\ba
 &\partial_{t}( \bm{v}_{L}^{\varepsilon}\cdot\bm{v} +\bm{h}\cdot\bm{h}_{L}^{\varepsilon})+\s\B[\bm{v}( \bm{v}\cdot \bm{v}_{L }^{\varepsilon})-\bm{h}(\bm{h} \cdot \bm{v}_{L }^{\varepsilon})
+\bm{v}(\bm{h} \cdot \bm{h}_{L }^{\varepsilon})
-\bm{h}(\bm{v} \cdot \bm{h}_{L }^{\varepsilon})\B]
\\&+\s\B[\Pi_L^\varep \bv+\Pi \bv_L^\varep\B]
  -\s\B[\big(\bm{h} (\bm{v}_L\cdot \bm{h}_L)\big)^\varep-\bm{h}(\bm{v}_L\cdot \bm{h}_L)^\varep\B]\\& +\f12\s \B[\big(\bm{v}(\bm{v}_L\cdot\bm{v}_L)\big)^\varep-\bm{v}(\bm{v}_L\cdot\bm{v}_L)^\varep\B]+\f12
\s \B[\big(\bm{v}(\bm{h}_L\cdot\bm{h}_L)\big)^\varep
-\bm{v}(\bm{h}_L\cdot\bm{h}_L)^\varep\B]\\
  = & -\f{2}{3}D^{\varepsilon}_{EL}(\bm{v},\bm{h});
\\ &\partial_{t}( \bm{v}_{T}^{\varepsilon}\cdot\bm{v} +\bm{h}\cdot\bm{h}_{T}^{\varepsilon})+\s\B[\bm{v}( \bm{v}\cdot \bm{v}_{T }^{\varepsilon})
-\bm{h}(\bm{h} \cdot \bm{v}_{T }^{\varepsilon})
+\bm{v}(\bm{h} \cdot \bm{h}_{T }^{\varepsilon})
-\bm{h}(\bm{v} \cdot \bm{h}_{T }^{\varepsilon})\B]
\\&+\s\B[\Pi_T^\varep \bv+\Pi \bv_T^\varep\B]
 -\text{div} \B[\big(\bm{h}(\bm{v}_T\cdot \bm{h}_T)\big)^\varep-\bm{h}(\bm{v}_T\cdot \bm{h}_T)^\varep\B]\\& +\f12\s \B[\big(\bm{v}(\bm{v}_T\cdot\bm{v}_T)\big)^\varep-\bm{v}(\bm{v}_T\cdot\bm{v}_T)^\varep\B]+\f12\s \B[\big(\bm{v}(\bm{h}_T\cdot\bm{h}_T)\big)^\varep
-\bm{v}(\bm{h}_T\cdot\bm{h}_T)^\varep\B] \\
=&-\f43D_{ET}^{\varepsilon}(\bm{v},\bm{h});
   \ea$$
   and the following local longitudinal and transverse K\'arm\'arth-Howarth equations for cross-helicity
   $$\ba& \partial_{t} (\bm{v}_{L}^{\varepsilon}\cdot\bm{h}+\bm{v}\cdot\bm{h}_{L}^{\varepsilon} )+\s\B[\bm{v}(\bm{v}\cdot \bm{h}_{L }^{\varepsilon})
  -\bm{h} (\bm{h} \cdot \bm{h}_{L }^{\varepsilon})
  +\bm{v} (\bm{h} \cdot \bm{v}_{L }^{\varepsilon})
  -\bm{h} (\bm{v} \cdot \bm{v}_{L }^{\varepsilon})\B]\\
&+\s(\Pi^\varep_L \bm{h}+\bm{h}_L^\varep \Pi) +\s\B[\big(\bv (\bm{h}_L\cdot \bm{v}_L)\big)^\varep-\bv(\bm{h}_L\cdot \bm{v}_L)^\varep\B]
\\& -\f12\s \B[\big(\bm{h}(\bm{h}_L\cdot\bm{h}_L)\big)^\varep-\bm{h}(\bm{h}_L\cdot\bm{h}_L)^\varep\B]-\f12
\s \B[\big(\bm{h}(\bm{v}_L\cdot\bm{v}_L)\big)^\varep-\bm{h}(\bm{v}_L\cdot\bm{v}_L)^\varep\B]\\
   =&-\f23D^{\varepsilon}_{CHL}(\bm{v},\bm{h});
\\ & \partial_{t} (\bm{v}_{T}^{\varepsilon}\cdot\bm{h}+\bm{v}\cdot\bm{h}_{T}^{\varepsilon} )+\s\B[\bm{v}(\bm{v}\cdot \bm{h}_{T }^{\varepsilon})
-\bm{h} (\bm{h} \cdot \bm{h}_{T }^{\varepsilon})
+\bm{v} (\bm{h} \cdot \bm{v}_{T }^{\varepsilon})
-\bm{h} (\bm{v} \cdot \bm{v}_{T }^{\varepsilon})\B]\\
& +\s(\Pi^\varep_T \bm{h}+\bm{h}_T^\varep \Pi)+\s\B[\big(\bv (\bm{h}_T\cdot \bm{v}_T)\big)^\varep-\bv(\bm{h}_T\cdot \bm{v}_T)^\varep\B]
\\& -\f12\s \B[\big(\bm{h}(\bm{h}_T\cdot\bm{h}_T)\big)^\varep-\bm{h}(\bm{h}_T\cdot\bm{h}_T)^\varep\B]-\f12
\s \B[\big(\bm{h}(\bm{v}_T\cdot\bm{v}_T)\big)^\varep-\bm{h}(\bm{v}_T\cdot\bm{v}_T)^\varep\B]\\
=&-\f43D^{\varepsilon}_{CHT}(\bm{v},\bm{h}).
    \ea $$
Here,
\begin{align}
D^{\varepsilon}_{EL}(\bv,\bm{h})=&\f34\int_{\mathbb{T}^{3}} \nabla \varphi^\varep(\ell)\cdot \delta \bm{v} \B([\delta \bm{v}_L]^2+[\delta\bm{h}_L]^2\B)+\f{2}{|\bl|}\varphi^\varep(\ell)\bn\cdot \delta \bm{v} \B([\delta \bm{v}_T]^2+[\delta\bm{h}_T]^2\B) d^3\bl
\nonumber\\
-&\f{3}{2}\int_{\mathbb{T}^{3}} \nabla \varphi^\varep(\ell)\cdot \delta \bm{h}(\delta \bm{v}_L \cdot \delta \bm{h}_L)+\f{2}{|\bl|}\varphi^\varep(\ell)\bn\cdot \delta \bm{h}(\delta \bm{v}_T\cdot \delta\bm{h}_T)d^3\bl \nonumber\\
-&3 \int_{\mathbb{T}^{3}} \f{1}{|\bl|}\varphi^\varep(\ell)\bn\cdot\B[\delta \bm{h}\times(\delta \bm{v} \times\delta \bm{h}) \B] d^3 \bl;\nonumber\\
D_{ET}^{\varepsilon}(\bv,\bm{h})=  &  \f38\int_{\mathbb{T}^{3}} \nabla \varphi^\varep(\ell)\cdot \delta \bm{v}\B([\delta \bm{v}_T]^2+[\delta \bm{h}_T]^2\B)-\f{2}{|\bl|}\varphi^\varep\bn\cdot \delta \bm{v} \B([\delta \bm{v}_T]^2 +[\delta \bm{h}_T]^2\B)  d^3\bl\nonumber\\  -&\f34\int_{\mathbb{T}^{3}} \nabla \varphi^\varep(\ell)\cdot \delta \bm{h}(\delta \bm{v}_T\cdot \delta \bm{h}_T)-\f{2}{|\bl|}\varphi^\varep(\ell)\bn\cdot \delta\bm{h} (\delta\bm{v}_T\cdot \delta\bm{h}_T)d^3\bl\nonumber\\+&\f32\int_{\mathbb{T}^{3}}\f{1}{|\bl|}\varphi^\varep(\ell)\bn\cdot
[\delta \bm{h}\times(\delta\bm{v}\times \delta\bm{h})]
d^3\bl;
\nonumber\\
 D^{\varepsilon}_{CHL}(\bv,\bm{h})=&
\f32\int_{\mathbb{T}^{3}} \nabla \varphi^\varep(\ell)\cdot \delta \bm{v}(\delta \bm{h}_L \cdot \delta \bm{v}_L)+\f{2}{|\bl|}\varphi^\varep(\ell)\bn\cdot \delta \bm{v}(\delta \bm{h}_T\cdot \delta\bm{v}_T) d^3 \bl \nonumber\\-&  \f{3}{4}\int_{\mathbb{T}^{3}} \nabla \varphi^\varep(\ell)\cdot \delta \bm{h} \B([\delta \bm{h}_L]^2+[\delta \bv_L]^2\B)+\f{2}{|\bl|}\varphi^\varep(\ell)\bn\cdot \delta \bm{h} \B([\delta \bm{h}_T]^2+[\delta \bv_T]^2\B) d^3\bl\nonumber\\
+&\f32\int_{\mathbb{T}^3}  \f{2}{|\bl|}\varphi^\varep(\ell)\bn\cdot\B[\delta \bm{v}\times(\delta \bm{h} \times \delta \bm{v})\B] d^3\bl;
\nonumber\\
  D_{CHT}^{\varepsilon} (\bv,\bm{h}) =&\f34\int_{\mathbb{T}^{3}} \nabla \varphi^\varep(\ell)\cdot \delta \bm{v}(\delta \bm{h}_T\cdot \delta \bm{v}_T)-\f{2}{|\bl|}\varphi^\varep\bn\cdot \delta\bm{v} (\delta\bm{h}_T\cdot \delta\bm{v}_T) d^3\bl \nonumber\\
  -& \f{3}{8}\int_{\mathbb{T}^{3}} \nabla \varphi^\varep(\ell)\cdot \delta \bm{h}\B([\delta \bm{h}_T]^2+[\delta \bv_T]^2\B)-\f{2}{|\bl|}\varphi^\varep\bn\cdot \delta \bm{h} \B([\delta \bm{h}_T]^2 +[\delta \bv_T]^2\B)d^3\bl\nonumber\\
  -&\f34\int_{\mathbb{T}^3} \f{2}{|\bl|}\varphi^\varep(\ell)\bn\cdot\B[\delta \bm{v}\times(\delta\bm{h}\times \delta\bm{v}) \B]d^3\bl.\nonumber
 \end{align}
 In addition, if suppose that   $(\bv, \bm{h})$ satisfies
	\be\label{1.19}
	\bv,\bm{h}\in L^3(0,T;L^3(\mathbb{T}^3)).
	\ee	
Then the function $D_{YX}^\varep(\bv,\bm{h})$ with $X=L,T$ and $ Y=E, CH$ converges to a distribution $D_{Y}(\bv,\bm{h})$ in the sense of distributions as $\varep\to 0$, and $D_{Y}(\bv,\bm{h})$ satisfies the local equation of total energy and cross-helicity, respectively
\be \ba\label{1.18}
\f12 \partial_{t}(  |\bv|^{2} + |\bm{h}|^{2} )+\text{div}\B[\f12\bv|\bv|^{2}+\f12\bv|\bm{h}|^{2}-\bm{h}(\bm{h}\cdot\bv)
 +\bv\Pi \B]= - D_{E}( \bv, \bm{h})
 \ea\ee
\be \ba	\partial_{t}(  \bv\cdot \bm{h})+\text{div}\B[  \bv(\bv\cdot \bm{h})-\f12(|\bm{h}|^{2})\bm{h}-\f12\bm{h}|\bv|^{2}+  \bm{h}\Pi\B]= -D_{CH }( \bv, \bm{h}).
\ea\ee
Furthermore, we have the following 4/5   and 8/15 laws
\begin{align} \label{mhde45}
&S_{EL}(\bv, \bm{h})=-\f45 D_{E}(\bv, \bm{h}),~ S_{ET}(\bv, \bm{h})=-\f{8}{15}D_{E}(\bv, \bm{h})\\
 \label{mhdc45}&S_{C HL}(\bv, \bm{h})=-\f45 D_{CH}(\bv, \bm{h}),~
 S_{CHT}(\bv, \bm{h})=-\f{8}{15}D_{CH}(\bv, \bm{h}),
\end{align}
where $$S_{YX}(\bv,\bm{h})= \lim\limits_{\lambda\to 0}S_{YX}(\bv,\bm{h},\lambda),~\text{with}~X=L,T, Y=E,CH,$$
and
\begin{align}
& S_{EL}(\bv, \bm{h},\lambda)\nonumber\\
  = &\f{1}{\lambda}\int_{\partial B } \bn \cdot \B[ \delta \bm{v}(\lambda\bl)|\B(\delta \bm{v}_L(\lambda\bl)]^2+[\delta \bm{h}_L(\lambda\bl)]^2\B)-2\delta \bm{h}(\bl)\B(\delta \bm{v}_L(\lambda\bl) \cdot \delta \bm{h}_L(\lambda\bl)\B)\B] \f{d\sigma(\bl) }{4\pi}\nonumber\\  &-\f45 \f{1}{\lambda}\int_{\partial B } \bn \cdot \B[ \delta \bm{h}(\lambda\bl)\times\B(\delta \bm{v}(\lambda\bl) \times\delta \bm{h}(\lambda\bl)\B)\B] \f{d\sigma(\bl) }{4\pi};\nonumber\\
 & S_{ET}(\bv, \bm{h},\lambda)\nonumber\\
 =& \f{1}{\lambda}\int_{\partial B } \bn \cdot \B[\delta \bm{v}(\lambda\bl)\B( [\delta \bm{v}_T(\lambda\bl)]^2+ [\delta \bm{h}_T(\lambda \bl)]^2\B)-2 \delta\bm{h}(\lambda\bl) \B(\delta\bm{v}_T(\lambda\bl)\cdot \delta\bm{h}_T(\lambda\bl)\B)\B] \f{d\sigma(\bl) }{4\pi}\nonumber\\~~~~~~~~~\,&  +\f45 \f{1}{\lambda}\int_{\partial B } \bn\cdot \B[\delta \bm{h}(\lambda\bl)\times\B(\delta\bm{v}(\lambda\bl)\times\delta \bm{h}(\lambda\bl)\B)\B] \f{d\sigma(\bl) }{4\pi};\nonumber\\
 &  S_{CHL}(\bv, \bm{h},\lambda)\nonumber\\
   =&
 \int_{\partial B } \bn \cdot \B[ 2\delta \bm{v}(\lambda\bl)\B(\delta \bm{h}_L(\lambda\bl) \cdot \delta \bm{v}_L(\lambda\bl)\B)-\delta \bm{h} (\lambda\bl)\B([\delta \bm{h}_L(\lambda\bl)]^2+[\delta \bm{v}_L(\lambda\bl)]^2\B)\B] \f{d\sigma(\bl) }{4\pi}\nonumber\\~~~~~~~~&  -\f45 \f{1}{\lambda}\int_{\partial B } \bn\cdot \B[\delta \bm{v}(\lambda\bl)\times\B(\delta \bm{h}(\lambda\bl) \times\delta \bm{v}(\lambda\bl)\B)\B] \f{d\sigma(\bl) }{4\pi};\nonumber
\\&  S_{CHT}(\bv, \bm{h},\lambda)\nonumber\\
= & \f{1}{\lambda}\int_{\partial B } \bn \cdot \B[2\delta\bm{v}(\lambda\bl) \B(\delta\bm{h}_T(\lambda\bl)\cdot \delta\bm{v}_T(\lambda\bl)\B)- \delta \bm{h}(\lambda \bl)\B( [\delta \bm{v}_T(\lambda\bl)]^2+[\delta \bm{h}_T(\lambda\bl)]^2\B) \B] \f{d\sigma(\bl) }{4\pi}\nonumber\\& +\f45 \f{1}{\lambda}\int_{\partial B } \bn \cdot \B[\delta \bm{v}(\lambda\bl)\times\B(\delta\bm{h}(\lambda\bl)\times\delta \bm{v}(\lambda\bl)\B)\B] \f{d\sigma(\bl) }{4\pi}.\nonumber
\end{align}
  \end{theorem}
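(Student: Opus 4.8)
The plan is to mirror the proof of Theorem~\ref{the01}, adapting Eyink's local framework to the two coupled nonlinearities of the MHD system~\eqref{MHD}. First I would mollify each equation in~\eqref{MHD} at scale $\varep$, writing $f^\varep=f\ast\varphi^\varep$, and assemble the mollified local balances for the total energy $\f12(|\bv|^2+|\bm{h}|^2)$ and the cross-helicity $\bv\cdot\bm{h}$ in the Duchon--Robert form already used for helicity in Theorem~\ref{the01}, in which the conserved density is paired with the mollified and longitudinally/transversely projected increments. The momentum balance then contributes the kinetic flux together with the magnetic stress term $-\s(\bm{h}\otimes\bm{h})$, while the induction balance contributes the coupling between $\bv$ and $\bm{h}$; once everything is rewritten in increment form, the fluxes are quadratic in $\delta\bv$ and $\delta\bm{h}$, and the essential new feature relative to the pure-energy computation is that they mix the two fields through terms such as $\delta\bm{h}(\delta\bv_L\cdot\delta\bm{h}_L)$ and $\delta\bm{h}\times(\delta\bv\times\delta\bm{h})$.

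Next I would push each quadratic flux through Eyink's longitudinal/transverse splitting~\eqref{eyinksp}, now applied to both $\bv$ and $\bm{h}$. This is precisely the step at which the symmetry of the single-field argument is lost: the products that arise are of the form $A_kB_iC_j$ with $\{\bm{A},\bm{B},\bm{C}\}$ drawn from $\{\bv,\bm{h}\}$ rather than all equal, so Eyink's original identities no longer apply. The key identity~\eqref{key identity} is exactly what is needed to convert $\f{\partial}{\partial\ell_k}(\bn_i\bn_j)$ acting on a mixed product into the compact transverse expression $\f1{|\bl|}\bn\cdot[\bm{C}(\bm{A}\cdot\bm{B})+\bm{B}(\bm{A}\cdot\bm{C})-2\bm{A}(\bm{B}\cdot\bm{C})]$; specializing to the triples $(\bm{A},\bm{B},\bm{C})$ that occur in~\eqref{MHD}---for instance $(\delta\bm{h},\delta\bv,\delta\bm{h})$, which returns precisely $\delta\bm{h}\times(\delta\bv\times\delta\bm{h})$---produces the gradient-plus-surface structure of $D^\varep_{EL}$, $D^\varep_{ET}$, $D^\varep_{CHL}$, $D^\varep_{CHT}$, including the triple-product surface terms that carry no matching gradient term. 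Collecting the pieces then yields the four mollified K\'arm\'an--Howarth equations as stated, with the modified identities~\eqref{2.26} and~\eqref{2.44} organizing the bookkeeping and reducing to Eyink's when the two fields coincide.

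For the passage $\varep\to0$ I would invoke the hypothesis~\eqref{1.19} that $\bv,\bm{h}\in L^3(0,T;L^3(\mathbb{T}^3))$. Each dissipation integrand is trilinear in the increments of $\bv$ and $\bm{h}$, so a Constantin--E--Titi / Duchon--Robert commutator estimate shows that $D^\varep_{YX}(\bv,\bm{h})$ is bounded in $L^1$ uniformly in $\varep$ and converges, in the sense of distributions, to a limit that is independent of the choice $X=L,T$ and of the mollifier $\varphi$; this common limit is $D_Y(\bv,\bm{h})$. Passing to the limit in the mollified balances then gives the local total-energy and cross-helicity equations~\eqref{1.18}. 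The cubic integrability~\eqref{1.19} is exactly the threshold at which these defect distributions are well defined, paralleling the Onsager-critical role of $B^{1/3}_{3,\infty}$ for both invariants in the magnetized setting.

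Finally, to extract the scaling relations I would localize in space by averaging over the sphere of radius $\lambda$ and passing to the limit $\lambda\to0$. Using the isotropic angular weights produced by the longitudinal and transverse projections on $\partial B$, the same combination of gradient and surface terms that defines $D^\varep_{YX}$ reappears with the universal geometric constants $4/5$ in the longitudinal case and $8/15$ in the transverse case; identifying the $\lambda\to0$ limit of $S_{YX}(\bv,\bm{h},\lambda)$ with $D_Y(\bv,\bm{h})$ yields~\eqref{mhde45} and~\eqref{mhdc45}. I expect the main obstacle to lie in the second step: because $\bv$ and $\bm{h}$ enter~\eqref{MHD} asymmetrically---the sign of $-\,\s(\bm{h}\otimes\bm{h})$ and the distinct roles of the two fields in the cross-helicity balance---one must apply~\eqref{key identity} separately to each mixed triple and verify that all residual non-transverse contributions cancel, leaving only the stated gradient terms together with the triple-product surface terms $\delta\bm{h}\times(\delta\bv\times\delta\bm{h})$ and $\delta\bv\times(\delta\bm{h}\times\delta\bv)$.
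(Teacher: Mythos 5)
Your proposal follows essentially the same route as the paper's proof: mollify the longitudinally/transversely projected MHD balances exactly as in Theorem \ref{the01}, apply the key identity \eqref{key identity} to the mixed triples (e.g.\ $(\delta\bm{h},\delta\bv,\delta\bm{h})$ yielding $\delta\bm{h}\times(\delta\bv\times\delta\bm{h})$) to produce $D^\varep_{EL},D^\varep_{ET},D^\varep_{CHL},D^\varep_{CHT}$ with their triple-product surface corrections, pass to the limit under \eqref{1.19} in Duchon--Robert fashion, and recover the $4/5$ and $8/15$ constants by spherical averaging and combining the longitudinal and transverse representations of $D_Y$. One small caveat: under \eqref{1.19} alone the uniform-in-$\varep$ $L^1$ bound on $D^\varep_{YX}$ that you invoke is not available (that would require Besov $1/3$-type regularity); the distributional convergence instead follows, as in the paper, by passing to the limit term-by-term on the left-hand side of the mollified K\'arm\'an--Howarth equations --- a mechanism your plan already contains, so this does not affect the overall argument.
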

\begin{remark}
According to the    vector triple product formula in Remark \ref{rem1.4}, one could rewrite  $S_{EL}(\bv, \bm{h},\lambda)$, $S_{ET}(\bv, \bm{h},\lambda)$, $S_{CHL}(\bv, \bm{h},\lambda)$  and $S_{CHT}(\bv, \bm{h},\lambda)$ slightly. Hence, the 4/5 laws \eqref{mhde45} and \eqref{mhdc45} correspond  to the
   following 4/5 laws for longitudinal third-order moments
\be\ba\label{1.24}
&\langle \delta \bv_{L}  (\delta \bv_{L})^{2}  \rangle+  \langle \delta \bv_{L} (\delta \bm{h}_{L} )^{ 2}  \rangle  -2\langle \delta \bm{h}_{L} (\delta \bm{h}_{L} \cdot\delta \bv_{L} )
\rangle-\f45\langle \delta \bm{h}_{L}(\delta \bm{h} \cdot\delta \bm{v} )  \rangle+\f45\langle \delta \bm{v}_{L}(\delta \bm{h} ) ^{2}  \rangle=-\f45\epsilon_{E} \bm{r},\\
&2\langle \delta \bv_{L} (\delta \bm{h}_{L} \cdot\delta \bv_{L} )
\rangle- \langle \delta \bm{h}_{L} (\delta \bm{h}_{L} )^{2}   \rangle-\langle \delta \bm{h}_{L}  (\delta \bv_{L})^{2} \rangle-\f45\langle \delta \bm{h}_{L} ( \delta \bv )^{2}
\rangle+\f45\langle \delta \bm{v}_{L} ( \delta \bv\cdot \delta \bm{h}  )
\rangle=-\f45\epsilon_{C} \bm{r}.
\ea\ee
\end{remark}
\begin{remark}
The four-fifths law \eqref{mhde45} concerning  energy in the inviscid MHD system
reduces to the celebrated   Kolmogorov four-fifths  law showed by   Eyink  in \cite{[Eyink1]} when the magnetic field becomes constant.
\end{remark}
\begin{remark} We rewrite the inviscid MHD equations \eqref{MHD} in terms of the
 Els\"asser variances
 ${\bm Z}^{+}=\f{\bm{v}+\bm{h}}{2}$,  ${\bm Z}^{-}=\f{\bm{v}-\bm{h}}{2}$  as follows
\be\label{MHDE}\left\{\ba
&\partial_{t}{\bm Z}^{+} + {\bm Z}^{-}\cdot {\bm Z}^{+}+\nabla \Pi=0, \\
&\partial_{t}{\bm Z}^{-} + {\bm Z}^{+}\cdot {\bm Z}^{-}+\nabla \Pi=0, \\
&\Div {\bm Z}^{+}=\Div {\bm Z}^{-}=0.
 \ea\right.\ee
Following the path of the above theorems, one could show that
\be\ba\label{1.26}
&\langle \delta \bm{ Z}^{-}_{L}  [\delta \bm{Z}^{+}_{L}|^{2}  \rangle-\f25\langle \delta \bm{Z}^{+}_{L} (\bm{Z}^{+}_{L}\cdot \bm{Z}^{-}_{L} )    \rangle+\f25\langle \delta \bm{Z}^{-}_{L}  [\delta \bm{Z}^{+} |^{2}  \rangle=-\f45\epsilon_{+} r,\\
&\langle \delta \bm{Z^{+}}_{L}  [\delta \bm{Z^}{-}_{L}|^{2}  \rangle-\f25\langle \delta \bm{Z}^{-}_{L} (\bm{Z}^{-}_{L}\cdot \bm{Z}^{+}_{L} )    \rangle+\f25\langle \delta \bm{Z}^{+}_{L}  [\delta \bm{Z}^{-} |^{2}  \rangle=-\f45\epsilon_{+} r.
\ea\ee
This is left to the readers.
\end{remark}
\begin{remark}
Compared with the application of the   K\'arm\'arth-Howarth type equations in a statistical view,  the filtering approach for derivation of
the scaling laws  developing in \cite{[DR],[Eyink1]}  holds for individual realizations of fluids. We refer to \cite{[Eyink1],[Eyink4],[KAM],[Dubrulle]} for advantage of this method.
\end{remark}

\begin{remark}Just as the relationship between four-fifths law  \eqref{Kolmogorov45law} and  four-thirds law \eqref{Yaglom}, the left-hand side of \eqref{1.20} also appears to the left of \eqref{1.15},   so does that in \eqref{1.24} and \eqref{1.26}.
\end{remark}
\begin{remark}
It is worth remarking that a four-fifths law of magnetic helicity in  the    electron    magnetohydrodynamic flow was established  in \cite{[Chkhetiani2]}. In a forthcoming
paper, we will address  four-fifths laws for energy and   magnetic helicity in electron and Hall magnetohydrodynamic
fluids.
\end{remark}

The remainder of this paper is organized as follows.
  Section 2 is devoted to  Kolmogorov type exact scaling relation   of    helicity in the incompressible fluids. In Section 3, we study dissipation rates of total energy and cross helicity in the magnetohydrodynamics
flow, respectively.   Finally, a summary  is provided.
\section{Four-fifths   laws of helicity in incompressible fluids }
In this section, we apply Eyink's   decomposition \eqref{eyinksp} involving   the longitudinal and transverse velocity increments   to  consider   the dissipation rates of  helicity in incompressible fluids.
A new identity
\eqref{2.27} is observed, which allows one to deal with the  interaction of different physical quantities.

\begin{proof}[Proof of Theorem \ref{the01}]
	The proof of Theorem \ref{the01} is divided into four steps.
	
	\underline{{\bf Step 1}}: First, inspired by the work of Duchon-Robert \cite{[DR]} and Eyink \cite{[Eyink1]}, we estiblish the local longitudinal and transverse K\'arm\'arth-Howarth equations for the helicity of  Euler equations \eqref{NS}. To do this, we need to consider the longitudinal and transverse balance equations for the Euler equations separately.  We define the longitudinal and transverse velocity (vorticity) relative to a vector $\bl$ as follows:
	\be\label{c1}\ba
	&\bv_L(\bx,t,\bl)=(\bn\otimes \bn)\cdot \bv(\bx+\bl,t),~~\bv_T(\bx,t,\bl)=({\bf 1}-\bn\otimes \bn)\cdot \bv(\bx+\bl,t);\\
		&\bomega_L(\bx,t,\bl)=(\bn\otimes \bn)\cdot \bomega(\bx+\bl,t),~\bomega_T(\bx,t,\bl)=({\bf 1}-\bn\otimes \bn)\cdot \bv(\bx+\bl,t);
	\ea\ee
which implies the fact that $\bv_L+\bv_T=\bv$ and  $\bomega_L+\bomega_T=\bomega$. Let $\varphi(\ell)$ be any $C_0^\infty$ function, nonnegative with unit integral, radially symmetric, and let $\varphi^\varepsilon(\ell)=\f{1}{\varepsilon^3}\varphi(\f{\ell}{\varepsilon})$. Then  the mollified version of $\bv_X, \bomega_X$ with $X=L,T$ can be defined as follows
\be\label{c2}\ba
\bv^\varep_X(\bx,t)=\int_{\mathbb{T}^{3}} \varphi^\varep(\bl)\bv_X(\bx,t;\bl)d^3\bl,~~~X=L,T,
\ea\ee
and
\be\label{c3}\ba
\bomega^\varep_X(\bx,t)=\int_{\mathbb{T}^{3}} \varphi^\varep(\bl)\bomega_X(\bx,t;\bl)d^3\bl,~~~X=L,T,
\ea\ee
respectively.

To obtain the longitudinal velocity (vorticity) equations relative to vector $\bl$,  multiplying $\eqref{NS}_1$ and $\eqref{NS}_2$ by $\varphi^\varep(\bl)(\bn\otimes \bn)$ and integrating over $\mathbb{T}^3$, we have
\be\label{c4}\ba
&\partial_t \bv_L^\varep+\s (\bv\otimes \bv_L)^\varep+\s {\bf \Pi}_L^\varep=0,\\
&\partial_t \bomega_L^\varep+\s (\bv\otimes \bomega_L)^\varep-\s (\bomega \otimes \bv_L)^\varep=0,
\ea\ee
where
\be\label{c5}
{\bf \Pi}^\varep_L=\int_{\mathbb{T}^{3}}  \varphi^\varep(\bl)(\bn\otimes \bn)P(\bx+\bl,t)d^3\bl.
\ee
By the same token, multiplying $\eqref{NS}_1$ and $\eqref{NS}_2$ by $\varphi^\varep(\bl)({\bf 1}-\bn\otimes \bn)$ and integrating over $\mathbb{T}^3$, it follows that
\be\label{c6}\ba
&\partial_t \bv_T^\varep+\s (\bv\otimes \bv_T)^\varep+\s {\bf \Pi}_T^\varep=0,\\
&\partial_t \bomega_T^\varep+\s (\bv\otimes \bomega_T)^\varep-\s (\bomega \otimes \bv_T)^\varep=0,
\ea\ee
where
\be\label{c7}
{\bf \Pi}^\varep_T=\int_{\mathbb{T}^{3}}  \varphi^\varep(\bl)({\bf 1}-\bn\otimes \bn)P(\bx+\bl,t)d^3\bl.
\ee
Before proceeding further, we  claim that
\be\label{c8}
\s {\bf \Pi}^\varep_X(\bx,t)=\nabla P_X(\bx,t),~~X=L,T,
\ee
holds in the sense of distributions, where $P_L(\bx,t), P_T(\bx,t)$ are scalar functions defined  as follows
\be\label{c9}
P_X^\varep(\bx,t)=\int_{\mathbb{T}^{3}}  \varphi^\varep_X(\ell)P(\bx+\bl,t)d^3\bl,~~X=L,T,
\ee
with
\be\label{c10}
\varphi_L(\ell)=\varphi(\ell)-\varphi_T(\ell),~~~~\varphi_T(\ell)=2\int_{|\bl|}^\infty \f{\varphi(\ell^{'})}{\ell^{'}} d\ell^{'}.
\ee
With the help of  the definition of $\varphi(\ell)$, it is easy to see that $\varphi_L(\ell)$ and $ \varphi_T(\ell)$ defined here  are compactly supported and $C^\infty$ everywhere except at $0$, where they have a mild (logarithmic) singularity.

Now we are in a position to  show the validity of \eqref{c8}. On the one hand, using a straightforward computation, we have the following basic relation
\be\label{c11}\ba
\partial_{\ell_k}\bn_i&=\f{\partial}{\partial\ell_k}\B(\f{\ell_i}{|\bl|}\B)=\f{\delta^{ik}|\bl|-\f{\ell_i\ell_k}{|\bl|}}{|\bl]^2}=\f{1}{|\bl|}(\delta^{ik}-\bn_i\bn_k),
\ea\ee
which together with the integration by parts yields
\be\label{c12}\ba
\s {\bf \Pi}^\varep_L&=\partial_{x_i}\int_{\mathbb{T}^{3}}  \varphi^\varep(\ell)(\bn_i\bn_j)P(\bx+\bl) d^3\bl\\
&=\int_{\mathbb{T}^{3}}  \varphi^\varep(\ell)(\bn_i\bn_j)\partial_{x_i}P(\bx+\bl) d^3\bl\\
&=-\int_{\mathbb{T}^{3}}  \B[\f{d\varphi^\varep(\ell)}{d\ell}\bn_j+\varphi^\varep(\ell)\f{2}{|\bl|}\bn_j\B]P(\bx+\bl) d^3\bl.
\ea\ee
 On ther other hand, in light of the  definition of  $P_L^\varep$ and \eqref{c10}, we have
\be\label{c13}\ba
\nabla \varphi^\varep_L(\ell)=\B(\f{d\varphi^\varep}{d\ell}(\ell)+\f{2}{|\bl|}\varphi^\varep(\ell)\B)\bn.
\ea\ee
Combining \eqref{c12}, \eqref{c13} and using the integration by parts again, we can deduce that
\be\label{c14}\ba
\s {\bf \Pi}^\varep_L =-\int_{\mathbb{T}^{3}}  \partial_{\ell_j}\varphi^\varep_L(\ell)P(\bx+\bl,t)d^3\ell 
  =\partial_{x_j}\int_{\mathbb{T}^{3}} \varphi^\varep_L(\ell)P(\bx+\bl,t)d^3\ell=\nabla P_L^\varep(\bx).
\ea\ee
Similarly, since ${\bf \Pi}_L^\varep+{\bf \Pi}_T^\varep=P^\varep{\bf 1}$ and $P_L^\varep+P_T^\varep=P^\varep$, we can also obtain $\s {\bf \Pi}_T^\varep=\nabla P_T^\varep$. Then we have proved the claim \eqref{c8}.

Now, thanks to \eqref{c8}, the equations  \eqref{c4} and \eqref{c6} can be further simplified as follows
\be\label{c15}\ba
&\partial_t \bv_X^\varep+\s (\bv\otimes \bv_X)^\varep+\nabla P_X^\varep=0,~~X=L,T,\\
&\partial_t \bomega_X^\varep+\s (\bv\otimes \bomega_X)^\varep-\s (\bomega \otimes \bv_X)^\varep=0,~~X=L,T.
\ea\ee
Next, our task is to show $\bv_L^\varep,\bv_T^\varep$ and $\bomega_L^\varep,\bomega_T^\varep$ are divergence-free in the sense of distributions.

 Actually, for any test function $\psi(\bx)\in C_0^\infty(\mathbb{T}^3)$, it follows from a straitforward  computation that
\be\label{c16}\ba
\int_{\mathbb{T}^{3}} \psi(\bx)\s \bv_L^\varep(\bx,t)d^3\bx&=\int_{\mathbb{T}^{3}}\psi(\bx)\partial_{x_i}\int_{\mathbb{T}^{3}} \varphi^\varep(\ell)\bn_i\bn_j\bv(\bx+\bl)d^3\bl d^3\bx\\
&=\int_{\mathbb{T}^{3}}\int_{\mathbb{T}^{3}}\psi(\bx) \varphi^\varep(\ell)\bn_i\bn_j\partial_{x_i}v_j(\bx+\bl)d^3\bl d^3\bx\\
&=-\int_{\mathbb{T}^{3}}\int_{\mathbb{T}^{3}}\partial_{\ell_i}\B[ \varphi^\varep(\ell)\bn_i\bn_j\B]\psi(\bx)v_j(\bx+\bl)d^3\bx d^3\bl\\ 
&=\int_{\mathbb{T}^{3}} \varphi^\varep_L(\ell)\s \big(\psi*\bv\big)(\bl)d^3\bl=0,
\ea\ee
where in the last equality we have used the fact that $\bv$ is divergence-free in the sense of ditributions. Thus, we have shown that  $\s \bv_L^\varep=0$ in the sense of distributions. Moreover, according to the fact $\bv_L^\varep+\bv_T^\varep=\bv^\varep$,  it follows that $\s  \bv_{T}^\varep=0$. Using the same argument as above, we can easily carry out $\s \bomega_L^\varep=\s \bomega _T^\varep=0$ in the sense of distributions.

Then we are going to establish the  local longitudinal K\'arm\'arth-Howarth equations for the helicity of  Euler equations. Multiplying $\eqref{c15}_1,\eqref{NS}_2, \eqref{c15}_2,\eqref{NS}_1$ with $X=L$ by $\bomega,\bv_L^\varep,\bv,\bomega_L^\varep$, respectively, we have
\begin{align}
\label{c17}
&\bomega\cdot\partial_{t} \bv_L^\varep+\bomega\cdot \s (\bv\otimes \bv_L)^\varep+\bomega\cdot \nabla P_L^\varep =0,
\\
\label{c20}
&\bv_L^\varep\cdot \partial_t \bomega+\bv_L^\varep\cdot \s (\bv \otimes \bomega)-\bv_L^\varep\cdot\s (\bomega\otimes \bv) =0,\\
\label{c18}
&\bv\cdot \partial_t \bomega^\varep_L+\bv\cdot \s (\bv \otimes \bomega_L)^\varep-\bv\cdot\s (\bomega\otimes \bv_L)^\varep =0,
\\
\label{c19}
&\bomega^\varep_L\cdot\partial_{t} \bv+\bomega^\varep_L\cdot \s (\bv\otimes \bv)+\bomega^\varep_L\cdot \nabla P =0.
\end{align}
On account of the Leibniz formula and divergence-free condition, we infer that
$$\ba
&\bomega\cdot \s (\bv\otimes \bv_L)^\varep+ \bv^\varep_L\cdot \s (\bv\otimes \bomega)=\s[\bv(\bomega\cdot\bv_L^\varep)]+\omega_j\cdot \partial_i\B[ (v_i v_{L_j})^\varep- (v_i v_{L_j}^\varep)\B],\\
&-\bv_L^\varep\cdot \s (\bomega \otimes \bv)-\bv\cdot \s (\bomega \otimes \bv_L)^\varep=-\s[\bomega(\bv\cdot\bv_L^\varep)]-v_j\partial_i\B[(\omega_i v_{L_j})^\varep-\omega_i v_{L_j}^\varep\B],\\
&\bv\cdot\s (\bv\otimes \bomega_L )^\varep+\bomega_L^\varep\cdot\s (\bv\otimes \bv)=\s[\bv (\bv\cdot\bomega_L^\varep)]+v_j\partial_i\B[(v_i\omega_{L_j})^\varep-(v_i \omega_{L_j}^\varep)\B].
\ea$$
With this in hand, after a few computations, we get
\be\label{c21}\ba
&\partial_t(\bomega\cdot \bv_L^\varep+\bv\cdot \bomega_L^\varep)+\s (\bomega P_L^\varep+P\bomega_L^\varep) +\s \B[\bv(\bomega\cdot\bv_L^\varep)-\bomega(\bv\cdot\bv_L^\varep)+\bv(\bv\cdot\bomega_L^\varep)\B]\\
=&\omega_j\partial_i\B[(v_iv_{L_j}^\varep)-(v_iv_{L_j})^\varep\B]+v_j\partial_i
\B[(v_i\omega_{L_j}^\varep)-(v_i\omega_{L_j})^\varep\B]+v_j\partial_i\B[(\omega_iv_{L_j})^\varep-(\omega_i v_{L_j}^\varep)\B].
\ea\ee
To process further, we notice that
\be\label{c22}\ba
\delta \bv_L(\ell)&=(\bn\otimes \bn)\delta \bv=\bn_i(\bn_j\delta v_j)=(\delta v_{L_1},\delta v_{L_2},\delta v_{L_3}),~\text{with}~\delta v_{L_i}=\bn_i(\bn_j \delta v_j).
\ea\ee
And for any vectors $\bm{E}=(e_1,e_2,e_3)$ and $\bm{F}=(f_1,f_2,f_3)$, it is easy to verify that
\be\label{c23}\ba
 \delta \bm{E}_{L}\cdot \delta \bm{F}_{L}=\bn_i(\bn_j \delta e_j)\cdot\bn_i(\bn_k \delta f_k)=\bn_j \bn_k\delta e_j \delta f_k=\bn_j \bn_i\delta e_j \delta f_i= \bn_i\bn_j\delta e_i \delta f_j
\ea\ee
 and
\be\label{c24}\ba
\delta\bm{E}_{T}\cdot \delta \bm{F}_{T}&=\delta {e}_{T_i}\cdot \delta {f}_{T_i}\\
&=(\delta^{ij}-\bn_i\bn_j)\delta e_j\cdot (\delta^{ik}-\bn_i\bn_k)\delta f_k\\ 
&=(\delta^{ij}-\bn_i\bn_j)\delta e_i\delta f_j.\\
\ea\ee
Combining \eqref{c23} and \eqref{c24},   we can obtain
\be\label{c25}\ba
&\int_{\mathbb{T}^{3}} \nabla \varphi^\varep(\ell)\cdot \delta \bomega (\bl)[\delta \bv_L(\bl)]^2+\f{2}{|\bl|}\varphi^\varep(\ell)\bn\cdot \delta \bomega (\bl)[\delta \bv_T(\bl)]^2 d^3\bl\\
 =&\int_{\mathbb{T}^{3}} \f{\partial \varphi^\varep}{\partial_{\ell_k}}\cdot \delta \omega_k\bn_i\bn_j\delta v_i \delta v_j+\varphi^\varep\bn_k\delta \omega_k (\f{\partial \bn_i}{\partial\ell_j}+\f{\partial \bn_j}{\partial\ell_i})\delta v_i\delta v_j d^3\bl, \ea\ee
where we have used \eqref{c11}.

Then it follows from the Leibniz formula  that
\be\ba\label{2.26}
	&\int_{\mathbb{T}^{3}} \nabla \varphi^\varep(\ell)\cdot \delta \bomega (\bl)[\delta \bv_L(\bl)]^2+\f{2}{|\bl|}\varphi^\varep(\ell)\bn\cdot \delta \bomega (\bl)[\delta \bv_T(\bl)]^2 d^3\bl\\
 =&\int_{\mathbb{T}^{3}} \f{\partial}{\partial_{\ell_k}}\B(\varphi^\varep \bn_i\bn_j\B)\delta \omega_k\delta v_i\delta v_j -\varphi^\varep\B[\f{\partial}{\partial_{\ell_k}}(\bn_i\bn_j)-(\f{\partial \bn_i}{\partial\ell_j}+\f{\partial \bn_j}{\partial\ell_i})\bn_k\B]\delta \omega_k \delta v_i\delta v_j d^3\bl. \ea\ee
Before going any further,   assume we have proved that, for any vector fields ${\bf A}=(A_1,A_2,A_3), {\bf B}=(B_1,B_2,B_3)$ and ${\bf C}=(C_1,C_2,C_3)$, there holds
 \be\label{2.27}\ba
&\B[\f{\partial}{\partial_{\ell_k}}(\bn_i\bn_j)-(\f{\partial \bn_i}{\partial\ell_j}+\f{\partial \bn_j}{\partial\ell_i})\bn_k\B]A_kB_iC_j\\
=&\f{1}{|\bl|}\bn\cdot\B[{\bf C}({\bf A}\cdot {\bf B})+{\bf B}({\bf A}\cdot {\bf C})-2{\bf A}({\bf B}\cdot {\bf C})\B].
\ea\ee
As a consequent, by choosing ${\bf A}=\delta \bomega, {\bf B}={\bf C}=\delta \bv$ in \eqref{2.27}, we have
\be\label{c33}\ba
&\B[\f{\partial}{\partial_{\ell_k}}(\bn_i\bn_j)-(\f{\partial \bn_i}{\partial\ell_j}+\f{\partial \bn_j}{\partial\ell_i})\bn_k\B]\delta \omega_k\delta v_i \delta v_j\\
 =&\f{2}{|\bl|}\bn\cdot\B[\delta \bv(\delta \bomega\cdot \delta \bv)-\delta \bomega(\delta \bv\cdot \delta \bv)\B].
\ea\ee
Substituting \eqref{c33} into \eqref{2.26}, it yields that
\be\ba\label{2.29}	&\int_{\mathbb{T}^{3}} \nabla \varphi^\varep(\ell)\cdot \delta \bomega (\bl)[\delta \bv_L(\bl)]^2+\f{2}{|\bl|}\varphi^\varep(\ell)\bn\cdot \delta \bomega (\bl)[\delta \bv_T(\bl)]^2 d^3\bl\\=&\int_{\mathbb{T}^{3}} \f{\partial}{\partial_{\ell_k}}\B(\varphi^\varep \bn_i\bn_j\B)\delta \omega_k\delta v_i\delta v_j -\f{2}{|\bl|}\varphi^\varep\bn\cdot\B[\delta \bv(\delta \bomega \cdot \delta \bv)-\delta \bomega (\delta \bv\cdot \delta \bv)\B]d^3\bl,
\ea\ee
Next, for simplicity of presentation, we denote
\be\label{2.30}
\bv(\bx+\bl)=\bar{\bv}=(\bar{v}_1,\bar{v}_2,\bar{v}_3 ),  \bomega(\bx+\bl)=\bar{\bomega}=(\bar{\omega}_1,\bar{\omega}_2,\bar{\omega}_3 ).\ee
A routine computation gives rise to
\be\label{2.31}\ba
&\int_{\mathbb{T}^{3}} \f{\partial}{\partial_{\ell_k}}\B(\varphi^\varep \bn_i\bn_j\B)\delta \omega_k\delta v_i\delta v_j d^3\bl\\
=&\int_{\mathbb{T}^{3}} \f{\partial}{\partial_{\ell_k}}\B(\varphi^\varep \bn_i\bn_j\B)\B[{\omega}_k(\bx+\bl)-\omega_k(\bx)\B]\B[{v}_i(\bx+\bl)-v_i(\bx)\B]
\B[{v}_j(\bx+\bl)-v_j(\bx)\B]d^3\bl\\
 =&\int_{\mathbb{T}^{3}} \f{\partial}{\partial_{\ell_k}}\B(\varphi^\varep \bn_i\bn_j\B)\\&\times\B(\bar{\omega}_k \bar{v}_i\bar{v}_j -\bar{\omega}_k\bar{v}_i v_j-\bar{\omega}_kv_i\bar{v}_j+\bar{\omega}_kv_iv_j-\omega_k\bar{v}_i\bar{v}_j+\omega_k\bar{v}_iv_j+\omega_kv_i\bar{v}_j-\omega_kv_iv_j\B)d^3\bl.
\ea\ee
Then we will deal with  the terms on the right-hand side of \eqref{2.31} one by one. For example, according to integration by parts and \eqref{c23}, we get
\be\label{2.32}\ba
&\int_{\mathbb{T}^{3}} \f{\partial}{\partial_{\ell_k}}\B(\varphi^\varep \bn_i\bn_j\B)\bar{\omega}_k\bar{v}_i\bar{v}_{j}d^3\bl\\
=&-\int_{\mathbb{T}^{3}} \varphi^\varep \bn_i\bn_j\f{\partial}{\partial_{\ell_k}}\B(\bar{\omega}_k\bar{v}_i\bar{v}_{j}\B)d^3\bl\\
=&-\f{\partial}{\partial_{x_k}}\int_{\mathbb{T}^{3}} \varphi^\varep \bn_i\bn_j\bar{\omega}_k\bar{v}_i\bar{v}_{j}d^3\bl\\ 
=&-\s\B(\bomega(\bv_L\cdot\bv_L)\B)^\varep.
\ea\ee
Performing the same procedure for the rest terms on the right-hand side of \eqref{2.31}, we have
\be\label{2.33}\ba
&-\int_{_{\mathbb{T}^{3}}} \f{\partial}{\partial_{\ell_k}}\B[\varphi^\varep \bn_i\bn_j\B]\bar{\omega}_k\bar{v}_iv_j d^3\bl=v_j\partial_k(\omega_k v_{L_j})^\varep,\\
&-\int_{\mathbb{T}^{3}} \f{\partial}{\partial_{\ell_k}}\B[\varphi^\varep \bn_i\bn_j\B]\bar{\omega}_kv_i\bar{v}_j d^3\bl=v_i\partial_k(\omega_k v_{L_i})^\varep,\\
&\int_{\mathbb{T}^{3}} \f{\partial}{\partial_{\ell_k}}\B[\varphi^\varep \bn_i\bn_j\B]\bar{\omega_k}v_iv_j d^3\bl=-(\bv_L\cdot\bv_L)\partial_k(\omega_k)^\varep=0,\\
&-\int_{\mathbb{T}^{3}} \f{\partial}{\partial_{\ell_k}}\B[\varphi^\varep \bn_i\bn_j\B]\omega_k\bar{v}_i\bar{v}_jd^3\bl=\omega_k\partial_k(\bv_L\cdot \bv_L)^\varep=\s \B[\bomega(\bv_L\cdot \bv_L)^\varep\B],\\
&\int_{\mathbb{T}^{3}} \f{\partial}{\partial_{\ell_k}}\B[\varphi^\varep \bn_i\bn_j\B]\omega_k\bar{v}_iv_jd^3\bl =
-\omega_k v_j\partial_k(v_{L_j})^\varep=-v_j\partial_k(\omega_k v_{L_j}^\varep),\\
&\int_{\mathbb{T}^{3}} \f{\partial}{\partial_{\ell_k}}\B[\varphi^\varep \bn_i\bn_j\B]\omega_kv_i \bar{v}_j d^3 \bl =-\omega_k v_i\partial_k v_{L_i}^\varep=-v_i\partial_k(\omega_k v_{L_i}^\varep),\\
&-\int_{\mathbb{T}^{3}} \f{\partial}{\partial_{\ell_k}}\B[\varphi^\varep \bn_i\bn_j\B]\omega_kv_iv_j d^3\bl =0,
\ea\ee
where the divergence-free condition of the velocity and the vorticity was used many times.
Then plugging \eqref{2.32} and \eqref{2.33} into \eqref{2.31}, we can obtain
\be\label{2.34}\ba
 &\int_{\mathbb{T}^{3}} \f{\partial}{\partial_{\ell_k}}\B[\varphi^\varep \bn_i\bn_j\B]\delta \omega_k\delta v_i\delta v_j d^3\bl\\
 =&-\s \B[\big(\bomega(\bv_L\cdot\bv_L)\big)^\varep-\bomega(\bv_L\cdot\bv_L)^\varep\B]+2v_j\partial_k\B[(\omega_k v_{L_j})^\varep-(\omega_k v_{L_j}^\varep)\B].
\ea\ee
Furthermore, by inserting \eqref{2.34} into \eqref{2.29}, we observe that
\be\label{2.35}\ba
&\f12\int_{\mathbb{T}^{3}} \nabla \varphi^\varep(\ell)\cdot \delta \bomega [\delta \bv_L]^2+\f{2}{|\bl|}\varphi^\varep(\ell)\bn\cdot \B[\delta \bomega [\delta \bv_T]^2 +\delta \bv(\delta \bomega \cdot \delta \bv)-\delta \bomega (\delta \bv\cdot \delta \bv)\B]d^3\bl\\
&+\f12\s \B[\big(\bomega(\bv_L\cdot\bv_L)\big)^\varep-\bomega(\bv_L\cdot\bv_L)^\varep\B]
\\=& v_j\partial_k\B[(\omega_k v_{L_j})^\varep-(\omega_k v_{L_j}^\varep)\B].
\ea\ee
Along the same lines as \eqref{2.26}, we find
\be\label{2.36}\ba
&\int_{\mathbb{T}^{3}} \nabla \varphi^\varep(\ell)\cdot \delta \bv(\bl)(\delta \bv_L \cdot \delta \bomega_L)+\f{2}{|\bl|}\varphi^\varep(\ell)\bn\cdot \delta \bv(\bl)(\delta \bv_T\cdot \delta\bomega_T) d^3 \bl\\
 =&\int_{\mathbb{T}^{3}} \f{\partial }{\partial_{\ell_k}}\B(\varphi^\varep\bn_i\bn_j\B)\delta v_k\delta v_i \delta \omega_j-\varphi^\varep\B[\f{\partial}{\partial_{\ell_k}}(\bn_i\bn_j)-(\f{\partial \bn_i}{\partial\ell_j}+\f{\partial \bn_j}{\partial\ell_i})\bn_k\B]\delta v_k\delta v_i \delta \omega_j d^3\bl
\ea\ee
From \eqref{2.27}, we reformulate the above equation  as
\be\ba\label{2.37}
&\int_{\mathbb{T}^{3}} \nabla \varphi^\varep(\ell)\cdot \delta \bv(\bl)(\delta \bv_L \cdot \delta \bomega_L)+\f{2}{|\bl|}\varphi^\varep(\ell)\bn\cdot \delta \bv(\bl)(\delta \bv_T\cdot \delta\bomega_T) d^3 \bl\\
=&\int_{\mathbb{T}^{3}} \f{\partial }{\partial_{\ell_k}}\B(\varphi^\varep\bn_i\bn_j\B)\delta v_k\delta v_i \delta \omega_j+\f{1}{|\bl|}\varphi^\varep\bn\cdot\B[\delta \bv(\delta \bomega \cdot \delta \bv)-\delta \bomega (\delta \bv\cdot \delta \bv)\B]d^3\bl.
\ea\ee
Repeating the deduction process of \eqref{2.34}, we arrive at
\be\label{2.38}\ba
 &\int_{\mathbb{T}^{3}} \f{\partial }{\partial_{\ell_k}}\B(\varphi^\varep\bn_i\bn_j\B)\delta v_k\delta v_i \delta \omega_j d^3 \bl\\
=&-\s \B[\big(\bv (\bv_L\cdot \bomega _L)\big)^\varep-\bv (\bv_L\cdot \bomega _L)^\varep\B]+\omega_j\partial_k\B[(v_k v_{L_j})^\varep-(v_k v_{L_j}^\varep)\B]+v_i\partial_k\B[(v_k \omega_{L_i})^\varep-(v_k \omega_{L_i}^\varep)\B].
\ea\ee
Then, it follows from \eqref{2.37} and \eqref{2.38} that
 \be\label{2.39}\ba
&\int_{\mathbb{T}^{3}} \nabla \varphi^\varep(\ell)\cdot \delta \bv(\delta \bv_L \cdot \delta \bomega_L)+\f{2}{|\bl|}\varphi^\varep(\ell)\bn\cdot  \B[\delta\bv(\delta \bv_T\cdot \delta\bomega_T)-\f12\big(\delta \bv(\delta \bomega \cdot \delta \bv)-\delta \bomega (\delta \bv\cdot \delta \bv)\big)\B] d^3 \bl\\&+\s \B[\big(\bv (\bv_L\cdot \bomega _L)\big)^\varep-\bv (\bv_L\cdot \bomega _L)^\varep\B]\\
=&\omega_j\partial_k\B[(v_k v_{L_j})^\varep-(v_k v_{L_j}^\varep)\B]+v_i\partial_k\B[(v_k \omega_{L_i})^\varep-(v_k \omega_{L_i}^\varep)\B].
\ea\ee
Finally, substituting \eqref{2.35} and \eqref{2.39} into \eqref{c21}, we conclude that
\be\ba\label{2.40v1}
&\partial_t(\bomega\cdot \bv_L^\varep+\bv\cdot \bomega_L^\varep)+\s (\bomega P_L^\varep+P\bomega_L^\varep) +\s \B[\bv(\bomega\cdot\bv_L^\varep)-\bomega(\bv\cdot\bv_L^\varep)+\bv(\bv\cdot\bomega_L^\varep)\B]\\
&+\s \B[\big(\bv (\bv_L\cdot \bomega _L)\big)^\varep-\bv (\bv_L\cdot \bomega _L)^\varep\B]
-\f12\s \B[\big(\bomega(\bv_L\cdot\bv_L)\big)^\varep-\bomega(\bv_L\cdot\bv_L)^\varep\B]
\\
=&-\f43 D_{HL}^\varep(\bv,\bomega),
\ea\ee
where
\be\label{c41}\ba
&D_{HL}^\varep(\bv,\bomega)\\
=&\f34\int \nabla \varphi^\varep(\ell)\cdot \delta \bv(\delta \bv_L \cdot \delta \bomega_L)+\f{2}{|\bl|}\varphi^\varep(\ell)\bn\cdot \B[\delta \bv(\delta \bv_T\cdot \delta\bomega_T)+\delta \bv\times (\delta \bomega \times \delta \bv)\B] d^3 \bl\\
&-\f38 \int_{\mathbb{T}^{3}} \nabla \varphi^\varep(\ell)\cdot \delta \bomega [\delta \bv_L]^2+\f{2}{|\bl|}\varphi^\varep(\ell)\bn\cdot \delta \bomega [\delta \bv_T]^2d^3\bl,\\
\ea\ee
and in the last equality we have used the following vector identity
\be\label{c41-1}\delta \bv\times (\delta \bomega \times \delta \bv)=\delta \bomega (\delta \bv\cdot \delta \bv) -\delta \bv(\delta \bomega \cdot \delta \bv).\ee
Then we have proved the local longitudinal K\'arm\'arth-Howarth equations for helicity. Next, we are in a position to get the  transverse K\'arm\'arth-Howarth equations for helicity. Actually, arguing in the same manner as in the derivation of \eqref{c21}, we have
\be\label{2.41}\ba
&\partial_t(\bomega\cdot \bv_T^\varep+\bv\cdot \bomega_T^\varep)+\s (\bomega P_T^\varep +P\bomega_T^\varep) +\s\B[\bv (\bomega\cdot \bv_T^\varep)+\bv(\bv\cdot\bomega_T^\varep)-\bomega(\bv\cdot\bv_T^\varep)\B]\\
=&\omega_j\partial_i\B[(v_i v_{T_j}^\varep)-(v_i v_{T_j})^\varep\B]+v_j\partial_i\B[(v_i\omega_{T_j}^\varep)-(v_i\omega_{T_j})^\varep\B]+v_j\partial_i\B[(\omega_i v_{T_j})^\varep-(\omega_i v_{T_j}^\varep)\B].
\ea\ee
In view of \eqref{c11}, \eqref{c24} and the Leibniz formula, we see that
\be\label{2.43}\ba
&\int_{\mathbb{T}^{3}} \nabla \varphi^\varep(\ell)\cdot \delta \bomega[\delta \bv_T]^2-\f{2}{|\bl|}\varphi^\varep\bn\cdot \delta \bomega [\delta \bv_T]^2 d^3\bl\\
 =&\int_{\mathbb{T}^{3}} \partial_{\ell_k}\B[\varphi^\varep(\delta^{ij}-\bn_i\bn_j)\B]\delta \omega_k\delta v_i \delta v_j+\varphi^\varep\B[\partial_{\ell_k}(\bn_i\bn_j)-(\f{\partial \bn_i}{\partial_{\ell_j}}+\f{\partial \bn_j}{\partial_{\ell_i}})\bn_k\B]\delta \omega_k\delta v_i \delta v_j d^3\bl,\\
\ea\ee
which together with \eqref{c33} and \eqref{c41-1} yields
\be\ba
&\int_{\mathbb{T}^{3}} \nabla \varphi^\varep(\ell)\cdot \delta \bomega[\delta \bv_T]^2-\f{2}{|\bl|}\varphi^\varep\bn\cdot \delta \bomega [\delta \bv_T]^2 d^3\bl \\
=&\int_{\mathbb{T}^{3}} \partial_{\ell_k}\B[\varphi^\varep(\delta^{ij}-\bn_i\bn_j)\B]\delta \omega_k\delta v_i \delta v_j-\f{2}{|\bl|}\varphi^\varep\bn\cdot \big[ \delta \bv\times(\delta \bomega\times \delta \bv)\big] d^3\bl
\ea\ee
Likewise, combining \eqref{2.27} and \eqref{c41-1}, we know that
 \be\label{2.44}\ba
&\int_{\mathbb{T}^{3}} \nabla \varphi^\varep(\ell)\cdot \delta \bm{v}(\delta \bm{\omega}_T\cdot \delta \bm{v}_T)-\f{2}{|\bl|}\varphi^\varep\bn\cdot \delta\bm{v} (\delta\bm{\omega}_T\cdot \delta\bm{v}_T) d^3\bl\\
=&\int_{\mathbb{T}^{3}} \partial_{\ell_k}\varphi^\varep\cdot\delta v_k (\delta^{ij}-\bn_i\bn_j)\delta \omega_i\delta v_j -\f{2}{|\bl|}\varphi^\varep\bn_k \cdot\delta v_k(\delta^{ij}-\bn_i\bn_j)\delta \omega_i \delta v_jd^3\bl\\
=&\int_{\mathbb{T}^{3}} \partial_{\ell_k}\B[\varphi^\varep(\delta^{ij}-\bn_i\bn_j)\B]\delta v_k\delta \omega_i \delta v_j+\varphi^\varep\B[\partial_{\ell_k}(\bn_i\bn_j)-(\f{\partial \bn_i}{\partial_{\ell_j}}+\f{\partial \bn_j}{\partial_{\ell_i}})\bn_k\B]\delta v_k\delta \omega_i \delta v_j d^3\bl\\
=&\int_{\mathbb{T}^{3}} \partial_{\ell_k}\B[\varphi^\varep(\delta^{ij}-\bn_i\bn_j)\B]\delta v_k\delta \omega_i \delta v_j+\f{1}{|\bl|}\varphi^\varep\bn\cdot \big[ \delta \bv\times(\delta \bomega\times \delta \bv)\big]  d^3\bl
\ea\ee
To control the first term on the right-hand side of \eqref{2.44}, it follows from a straightforward calculation that
$$\ba
&\int_{\mathbb{T}^{3}} \partial_{\ell_k}\B[\varphi^\varep(\delta^{ij}-\bn_i\bn_j)\B]\delta v_k\delta \omega_i \delta v_j d^3\bl\\
=&\int_{\mathbb{T}^{3}} \partial_{\ell_k}\B[\varphi^\varep(\delta^{ij}-\bn_i\bn_j)\B]\\&\times
[\overline{v}_k\overline{\omega}_{i}\overline{v}_{j}
-\overline{v}_k\overline{\omega}_{i} {v}_{j}
-\overline{v}_k {\omega}_{i}\overline{v}_{j}
+\overline{v}_k {\omega}_{i} {v}_{j}-{v}_k\overline{\omega}_{i}\overline{v}_{j}
+ {v}_k\overline{\omega}_{i} {v}_{j}
+ {v}_k {\omega}_{i}\overline{v}_{j}
- {v}_k {\omega}_{i} {v}_{j} ]d^3\bl,
\ea$$
where we have used the notation \eqref{2.30}.
For example, by virtue of  integration by parts and \eqref{c24}, we obtain
\be\label{c35}\ba
 \int_{\mathbb{T}^{3}} \f{\partial}{\partial_{\ell_k}} \B[\varphi^\varep(\delta^{ij}-\bn_i\bn_j)\B]\bar{v}_k\bar{\omega}_i\bar{v}_{j}
d^3\bl=&-\int_{\mathbb{T}^{3}} \B[\varphi^\varep(\delta^{ij}-\bn_i\bn_j)\B]\f{\partial}{\partial_{\ell_k}}\B(\bar{v}_k\bar{\omega}_i\bar{v}_{j}\B)d^3\bl\\
=&-\f{\partial}{\partial_{x_k}}\int_{\mathbb{T}^{3}} \B[\varphi^\varep(\delta^{ij}-\bn_i\bn_j)\B]\bar{v}_k\bar{\omega}_i\bar{v}_{j}d^3\bl\\
=&-\s\B[\bv (\bomega_T\cdot \bv_T)\B]^\varep.
\ea\ee
Similar to the above derivation, it follows that
\be\label{2.47}\ba
&-\int_{_{\mathbb{T}^{3}}} \f{\partial}{\partial_{\ell_k}}\B[\varphi^\varep(\delta^{ij}-\bn_i\bn_j)\B]\bar{v}_k\bar{\omega}_i v_{j}d^3\bl=v_j\partial_k(v_k \omega_{T_j})^\varep,\\
&-\int_{\mathbb{T}^{3}} \f{\partial}{\partial_{\ell_k}}\B[\varphi^\varep(\delta^{ij}-\bn_i\bn_j)\B]\bar{v}_k\omega_i\bar{v}_j d^3\bl=\omega_i\partial_k(v_k v_{T_i})^\varep,\\
&\int_{\mathbb{T}^{3}} \f{\partial}{\partial_{\ell_k}}\B[\varphi^\varep(\delta^{ij}-\bn_i\bn_j)\B]\bar{v}_k\omega_iv_j d^3\bl=-\bomega_T\cdot \bv_T\int_{\mathbb{T}^{3}} \varphi^\varep \f{\partial}{\partial_{\ell_k}}\bar{v}_k d^3\bl=0,\\
&-\int_{\mathbb{T}^{3}} \f{\partial}{\partial_{\ell_k}}\B[\varphi^\varep(\delta^{ij}-\bn_i\bn_j)\B]v_k\bar{\omega}_i\bar{v}_jd^3\bl
=v_k\partial_k(\bomega_T\cdot \bv_T)^\varep
=\s [\bv(\bomega_T\cdot \bv_T)^\varep],\\
&\int_{\mathbb{T}^{3}} \f{\partial}{\partial_{\ell_k}}
\B[\varphi^\varep(\delta^{ij}-\bn_i\bn_j)\B]v_k\bar{\omega}_iv_jd^3\bl =
-v_k v_j\partial_k(\omega_{T_j})^\varep=-v_j\partial_k(v_k \omega_{T_j}^\varep),\\
&\int_{\mathbb{T}^{3}} \f{\partial}{\partial_{\ell_k}}\B[\varphi^\varep(\delta^{ij}-\bn_i\bn_j)\B]v_k\omega_i\bar{v}_jd^3 \bl =-v_k \omega_i\partial_k v_{T_i}^\varep=-\omega_i\partial_k(v_k v_{T_i}^\varep),\\
&-\int_{\mathbb{T}^{3}} \f{\partial}{\partial_{\ell_k}}\B[\varphi^\varep(\delta^{ij}-\bn_i\bn_j)\B]v_k\omega_iv_j d^3\bl =0.
\ea\ee
Consequently, combining \eqref{2.44} through \eqref{2.47}, we get
\be\label{2.48}\ba
&\int_{\mathbb{T}^{3}} \nabla \varphi^\varep(\ell)\cdot \delta \bm{v}(\delta \bm{\omega}_T\cdot \delta \bm{v}_T)-\f{2}{|\bl|}\varphi^\varep\bn\cdot \delta\bm{v} (\delta\bm{\omega}_T\cdot \delta\bm{v}_T)-\f{1}{|\bl|}\varphi^\varep\bn\cdot \big[ \delta \bv\times(\delta \bomega\times \delta \bv)\big] d^3\bl \\
=&\int_{\mathbb{T}^{3}} \partial_{\ell_k}\B[\varphi^\varep(\delta^{ij}-\bn_i\bn_j)\B]\delta v_k\delta \omega_i \delta v_jd^3\bl\\
=&-\s\B[\big(\bv (\bomega_T\cdot \bv_T)\big)^\varep-\bv(\bomega_T\cdot \bv_T)^\varep]+v_j\partial_k\B[(v_k \omega_{T_j})^\varep-(v_k \omega_{T_j}^\varep)\B]+\omega_i\partial_k\B[(v_k v_{T_i})^\varep-(v_k v_{T_i}^\varep)\B],
\ea\ee
which implies
\be\label{2.49}\ba
&\int_{\mathbb{T}^{3}} \nabla \varphi^\varep(\ell)\cdot \delta \bm{v}(\delta \bm{\omega}_T\cdot \delta \bm{v}_T)-\f{2}{|\bl|}\varphi^\varep\bn\cdot \delta\bm{v} (\delta\bm{\omega}_T\cdot \delta\bm{v}_T)-\f{1}{|\bl|}\varphi^\varep\bn\cdot \big[ \delta \bv\times(\delta \bomega\times \delta \bv)\big] d^3\bl \\&+\s\B[\big(\bv (\bomega_T\cdot \bv_T)\big)^\varep-\bv(\bomega_T\cdot \bv_T)^\varep\B]\\
=&v_j\partial_k\B[(v_k \omega_{T_j})^\varep-(v_k \omega_{T_j}^\varep)\B]+\omega_i\partial_k\B[(v_k v_{T_i})^\varep-(v_k v_{T_i}^\varep)\B].
\ea\ee
On the other hand, by a suitable modification to the  derivation of \eqref{2.48}, we infer that
\be\label{2.50}\ba
&\int_{\mathbb{T}^{3}} \nabla \varphi^\varep(\ell)\cdot \delta \bm{\omega}[\delta \bm{v}_T]^2-\f{2}{|\bl|}\varphi^\varep\bn\cdot \delta \bm{\omega} [\delta \bm{v}_T]^2 +\f{2\varphi^\varep}{|\bl|}\bn\cdot\big[ \delta \bv\times(\delta \bomega\times \delta \bv)\big]d^3\bl\\
=&\int_{\mathbb{T}^{3}} \partial_{\ell_k}\B[\varphi^\varep(\delta^{ij}-\bn_i\bn_j)\B]\delta \omega_k\delta v_i \delta v_j d^3\bl\\
  =&-\s \B[\big(\bm{\omega}(\bm{v}_T\cdot\bm{v}_T)\big)^\varep-\bm{\omega}(\bm{v}_T\cdot\bm{v}_T)^\varep\B]+2v_j\partial_k\B[(\omega_k v_{T_j})^\varep-(\omega_k v_{T_j}^\varep)\B],
\ea\ee
which means
\be\label{2.51}\ba
&\f12\int_{\mathbb{T}^{3}} \nabla \varphi^\varep(\ell)\cdot \delta \bm{\omega}[\delta \bm{v}_T]^2-\f{2}{|\bl|}\varphi^\varep\bn\cdot \delta \bm{\omega} [\delta \bm{v}_T]^2 +\f{2}{|\bl|}\varphi^\varep\bn\cdot\big[ \delta \bv\times(\delta \bomega\times \delta \bv)\big]d^3 \bl\\ &+\f12\s \B[\big(\bm{\omega}(\bm{v}_T\cdot\bm{v}_T)\big)^\varep-\bm{\omega}(\bm{v}_T\cdot\bm{v}_T)^\varep\B]
\\
=&v_j\partial_k\B[(\omega_k v_{T_j})^\varep-(\omega_k v_{T_j}^\varep)\B].
\ea\ee
Thus, we plug \eqref{2.49} and \eqref{2.51} into  \eqref{2.41} to deduce that
\be\label{2.52}\ba
&\partial_t(\bomega\cdot \bv_T^\varep+\bv\cdot \bomega_T^\varep)+\s (\bomega P_T^\varep +P\bomega_T^\varep) +\s\B[\bv (\bomega\cdot \bv_T^\varep)+\bv(\bv\cdot\bomega_T^\varep)-\bomega(\bv\cdot\bv_T^\varep)\B]\\
&+\s\B[\big(\bv (\bomega_T\cdot \bv_T)\big)^\varep-\bv(\bomega_T\cdot \bv_T)^\varep\B] -\f12\s \B[\big(\bm{\omega}(\bm{v}_T\cdot\bm{v}_T)\big)^\varep-\bm{\omega}(\bm{v}_T\cdot\bm{v}_T)^\varep\B]
\\ 
=&-\f83D_{HT}^\varep(\bv,\bomega),
\ea\ee
where
\be\label{2.53}\ba
&D_{HT}^\varep(\bv,\bomega)\\
=&\f38\int_{\mathbb{T}^{3}}\nabla \varphi^\varep(\ell)\cdot \delta\bv(\delta \bv_T\cdot \delta \bomega_T)-\f{2}{|\bl|}\varphi^\varep(\ell)\bn\cdot\B[\delta \bv(\delta \bv_T\cdot \delta \bomega_T)+\delta \bv\times (\delta \bomega \times \delta \bv)\B]d^3\bl\\
&-\f{3}{16}\int_{\mathbb{T}^{3}} \nabla \varphi^\varep(\ell)\cdot \delta \bomega[\delta \bv_T]^2-\f{2}{|\bl|}\varphi^\varep(\ell)\bn\cdot\delta \bomega[\delta \bv_T]^2 d^3\bl.
\ea\ee
Then the validity of transverse K\'arm\'arth-Howarth equations for helicity has been proved.

\underline{{\bf Step 2}}: The next objective is passing to the limit of $\varep$ on the left hand side of \eqref{2.40v1} and \eqref{2.52}. Firstly, due to the definition of $\varphi^\varep$, we have the following elementary fact
\be\label{2.54}
\int_{\mathbb{T}^{3}} \varphi^\varep(\ell)(\bn\otimes \bn)d^3 \bl=\f13 {\bf 1}.
\ee
Together with  the definition of $\bv_L^\varep$ and using the Minkowski inequality, we can obtain
\be\label{2.55}\ba
\B\|\bv_L^\varep(\bx,t)-\f13\bv(\bx,t)\B\|_{L^q(\mathbb{T}^3)}&=
\B\|\int_{\mathbb{T}^{3}} \varphi^\varep(\ell)(\bn\otimes \bn)\cdot\big[\bv(\bx+\bl,t)-\bv(\bx,t)\big]d^3\bl\B\|_{L^q(\mathbb{T}^3)}\\
&\leq \int_{\mathbb{T}^{3}} \varphi^\varep(\ell)\|\bv(\bx+\bl,t)-\bv(\bx,t)\|_{L^q(\mathbb{T}^3)}d^3\bl,
\ea\ee
which implies $\|\bv_L^\varep(\bx,t)-\f13\bv(\bx,t)\|_{L^q(\mathbb{T}^3)}\to 0$ as $\varep\to 0$ according to the fact $\bv \in L^q(\mathbb{T}^3)$. Since $\bv_T^\varep=\bv^\varep-\bv_L^\varep$, we can deduce that $\|\bv_T^\varep(\bx,t)-\f23\bv(\bx,t)\|_{L^q(\mathbb{T}^3)}\to 0$ as $\varep\to 0$. In the same manner, since $\bomega \in L^n(\mathbb{T}^3)$, we can get that $\|\bomega_L^\varep(\bx,t)-\f13\bomega(\bx,t)\|_{L^n(\mathbb{T}^3)}\to 0$  and $\|\bomega_T^\varep(\bx,t)-\f23\bomega(\bx,t)\|_{L^n(\mathbb{T}^3)}\to 0$. Moreover, in light of Calder\'on-Zygmund theorem  and $\bv\in L^q(\mathbb{T}^3)$, we know that $P\in L^{\f{q}{2}}(\mathbb{T}^3)$. Consequently, we also deduce that  $\|P_L^\varep(\bx,t)-\f13P(\bx,t)\|_{L^{\f{q}{2}}(\mathbb{T}^3)}\to 0$ and $\|P_T^\varep(\bx,t)-\f23P(\bx,t)\|_{L^{\f{q}{2}}(\mathbb{T}^3)}\to 0$ as $\varep\to 0$. Then using the standard properties of mollification, we have $\B[\big(\bv(\bomega_L\cdot \bv_L)\big)^\varep-\bv(\bomega_L\cdot \bv_L)^\varep\B]\to 0$ and $\B[\bomega (\bv_L\cdot \bv_L)^\varep-\big(\bomega(\bv_L\cdot\bv_L)\big)^\varep\B]\to 0$, provided $\f1n +\f2q=1$. Hence, letting $\varep\to 0$, the left hand side of \eqref{2.40v1}   converges to
\be\label{2.56}\ba
&\f43\partial_t(\f12\bomega\cdot \bv)+\f43\s (\f12\bomega P)+\f43\s \B[\f12\bv(\bomega\cdot\bv)-\f14\bomega(\bv\cdot\bv)\B]=:-\f43D_H(\bv,\bomega)\\
\ea\ee
in the sense of distributions. Similarly, we can also prove the left hand side of  \eqref{2.52} converges to $-\f83D_H(\bv,\bomega)$, as $\varep\to 0$, when $X=T$. In conclusion, we have
\be\label{2.57-1}D^\varep_{HX}(\bv,\bomega)\to D_{H}(\bv,\bomega),\ee
for both $X=L,T$, as $\varep\to 0$, in the sense of distributions.

\underline{{\bf Step 3}}: Next, we have to show the 4/5 law for the helictiy in the incompressible fluids based on the previous the local longitudinal and transverse K\'arm\'arth-Howarth equations for the helicty of  Euler equations \eqref{NS} we have just proved.  To this end, we let
$$\ba
&\overline{S}_{HL}(\bm{v}, \bomega,\lambda)=\f{1}{\lambda}\int_{\partial B } \bn \cdot \B[  \delta \bv(\lambda\bl)(\delta \bv_L(\lambda\bl)\cdot \delta \bomega_L(\lambda\bl))-\f12\delta \bomega(\lambda\bl)[\delta \bv_L(\lambda\bl)]^2\B] \f{d\sigma(\bl) }{4\pi},\\&\overline{S}_{HT}(\bm{v}, \bomega,\lambda)=\f{1}{\lambda}\int_{\partial B } \bn \cdot \B[  \delta \bv(\lambda\bl)(\delta \bv_T(\lambda\bl)\cdot \delta \bomega_T(\lambda\bl))-\f12\delta \bomega(\lambda\bl)[\delta \bv_T(\lambda\bl)]^2\B] \f{d\sigma(\bl) }{4\pi},\\&\overline{S}_{H} (\bm{v}, \bomega,\lambda)=\f{1}{\lambda}\int_{\partial B } \bn \cdot\B[\delta\bv(\lambda\bl)\times (\delta \bomega(\lambda\bl) \times \delta \bv(\lambda\bl))\B] \f{d\sigma(\bl) }{4\pi}.
\ea$$
By  taking a direct computation and using the radially symmetric of the test function $\varphi^\varep(\ell)$, we  conclude by coarea formula and the change of variable that
\be\label{2.57}\ba
&D_{H}(\bv,\bomega)\\=	&\lim_{\varepsilon\rightarrow0}D^\varepsilon_{HT}(\bv,\bomega )\\
 =&\lim_{\varepsilon\rightarrow0}\f38\int_{\mathbb{T}^3} \B[\nabla \varphi^\varep(\ell)-\f{2}{|\bl|}\varphi^\varep \bn\B]\B[  \delta \bv(\delta \bv_T\cdot \delta \bomega_T)-\f12\delta \bomega[\delta \bv_T]^2\B]d^3\bl \\&-\lim_{\varepsilon\rightarrow0}\f34\int_{\mathbb{T}^3} \f{1}{|\bl|}\varphi^\varep \bn\cdot \delta \bv\times (\delta \bomega \times \delta \bv)d^3\bl\\
=&\lim_{\varepsilon\rightarrow0}\f38\B(\int_0^\infty r^3\varphi^{'}(r)-2r^2\varphi(r)dr4\pi \bar{S}_{HT}(\bv,\bomega,\varep\ell)\B)-\lim_{\varepsilon\rightarrow0}\f34\B(\int_0^\infty r^2\varphi(r)dr 4\pi\bar{S}_{H}(\bv,\bomega,\varep\ell)\B)\\
=&-\f{15}{8}\B(\bar{S}_{HT}(\bv,\bomega)+\f25\bar{S}_{H}(\bv,\bomega)\B)\\
=&-\f{15}{8} S_{HT}(\bv,\bomega),
\ea\ee
which leads to
$$S_{HT}(\bv,\bomega)= -\f{8}{15} D_{H}(\bv,\bomega),$$
where
\be\ba
S_{HT}(\bv,\bomega)=&\lim_{\lambda\to 0}\f{1}{\lambda}\int_{\partial B } \bn \cdot \B[  \delta \bv(\lambda\bl)(\delta \bv_T(\lambda\bl)\cdot \delta \bomega_T(\lambda\bl))-\f12\delta \bomega(\lambda\bl)[\delta \bv_T(\lambda\bl)]^2\B] \f{d\sigma(\bl) }{4\pi}\\
&+\f25\lim_{\lambda \to0} \f{1}{\lambda}\int_{\partial B } \bn \cdot\B[\delta\bv(\lambda\bl)\times (\delta \bomega(\lambda\bl) \times \delta \bv(\lambda\bl))\B] \f{d\sigma(\bl) }{4\pi}.
\ea\ee
Following exactly the lines of reasoning which led to \eqref{2.57}, we find that
\be\label{2.58}\ba
&D_{H}(\bv,\bomega)\\=&\lim\limits_{\varep\to0}D_{HL}^\varep(\bv,\bomega )\\
=&\lim\limits_{\varep\to0}\f34\int \nabla \varphi^\varep(\ell)\cdot\B[  \delta \bv(\delta \bv_L\cdot \delta \bomega_L)-\f12\delta \bomega [\delta \bv_L]^2\B]d^3\bl\\&+\lim\limits_{\varep\to0}\f32\int \f{1}{|\bl|}\varphi^\varep\bn\cdot\B[  \delta \bv(\delta \bv_T\cdot \delta \bomega_T)-\f12\delta \bomega[\delta \bv_T]^2 \B]d^3\bl\\
&+\lim\limits_{\varep\to0}\f32\int \f{1}{|\bl|}\varphi^\varep\bn\cdot\B[\delta\bv\times (\delta \bomega \times \delta \bv)\B] d^3\bl\\
=&\lim\limits_{\varep\to0}\B(\f34\int_0^\infty r^3\varphi^{'}(r)dr 4\pi \bar {S}_{HL}(\bv,\bomega,\varep\ell)\B)+\lim\limits_{\varep\to0}\B(\f32 \int_0^\infty \varphi(r) r^2 dr 4\pi \bar{S}_{HT}(\bv,\bomega,\varep\ell)\B)\\
&+\lim\limits_{\varep\to0}\B(\f32\int_0^\infty r^2\varphi(r) dr 4\pi \bar{S}_{H}(\bv,\bomega,\varep\ell)\B)\\
=&-\f94 \bar {S}_{HL}(\bv,\bomega)+\f32 \bar{S}_{HT}(\bv,\bomega)+\f32\bar{S}_{H}(\bv,\bomega).
\ea\ee
A combination of  \eqref{2.57}  and \eqref{2.58}, gives,  as $\varep\to0$,
\be\label{2.59}\left\{\ba
&D_{H}(\bv,\bomega)=-\f{15}{8} \bar{S}_{HT}(\bv,\bomega)-\f34\bar{S}_H(\bv,\bomega), \\
&D_{H}(\bv,\bomega)=-\f94 \bar {S}_{HL}(\bv,\bomega)+\f32 \bar{S}_{HT}(\bv,\bomega)+\f32\bar{S}_{H}(\bv,\bomega),
 \ea\right.\ee
 which implies that
\be\label{2.63}\ba
D_{H}(\bv,\bomega)&=-\f54\bar{S}_{HL}(\bv,\bomega)+\f12\bar{S}_{H}(\bv,\bomega)\\
&=-\f54 S_{HL}(\bv,\bomega),
\ea\ee
where \be\ba
S_{HL}(\bv,\bomega)&=\lim_{\lambda\to 0}\f{1}{\lambda}\int_{\partial B } \bn \cdot \B[  \delta \bv(\lambda\bl)(\delta \bv_L(\lambda\bl)\cdot \delta \bomega_L(\lambda\bl))-\f12\delta \bomega(\lambda\bl)[\delta \bv_L(\lambda\bl)]^2\B] \f{d\sigma(\bl) }{4\pi}\\
&-\f25\lim_{\lambda\to 0}\f{1}{\lambda}\int_{\partial B } \bn \cdot\B[\delta\bv(\lambda\bl)\times (\delta \bomega(\lambda\bl) \times \delta \bv(\lambda\bl))\B] \f{d\sigma(\bl) }{4\pi}.
\ea\ee
Then, we have proved the 4/5 law and 8/15 law for the helicity of Euler equations.

\underline{{\bf Step 4}}: To complete the proof, it remains to
  to prove the equality  \eqref{2.27} we have assumed. Indeed, for any vectors ${\bf A}=(A_1,A_2,A_3), {\bf B}=(B_1,B_2,B_3)$ and ${\bf C}=(C_1,C_2,C_3)$, we conclude by \eqref{c11} that
\be\label{c26}\ba
&\B[\f{\partial}{\partial_{\ell_k}}(\bn_i\bn_j)-(\f{\partial \bn_i}{\partial\ell_j}+\f{\partial \bn_j}{\partial\ell_i})\bn_k\B]A_kB_iC_j\\
=&\B[\big(\f{\partial \bn_i}{\partial \ell_k}\bn_j+\f{\partial \bn_j}{\partial \ell_k}\bn_i\big)-\f{2}{|\bl|}(\delta^{ij}-\bn_i\bn_j)\bn_k\B]A_kB_iC_j\\
=&\f{1}{|\bl|}\B[(\delta^{ik}-\bn_i\bn_k)\bn_j+(\delta^{jk}-\bn_j\bn_k)\bn_i-2(\delta^{ij}-\bn_i\bn_j)\bn_k\B]A_kB_iC_j\\
=&\f{1}{|\bl|}\B[(\delta^{ik}\bn_j+\delta^{jk}\bn_i-2\delta^{ij}\bn_k  ]A_{k}B_{i}C_{j}.
\ea\ee
For the sake of convience, we denote the left-hand side of above equality by $I$. Then, we will discuss term $I$ into five cases.\\
Case 1:  $i=k=j$. For this case, we see that
\be\label{c27}
I=\f{1}{|\bl|}(\bn_j+\bn_i-2\bn_k)A_kB_iC_j={\bf 0}.
\ee
Case 2:  $i=k$ but $j\neq k$. We notice that
\be\label{c28}\ba
I&=\f{1}{|\bl|}\bn_jA_kB_iC_j\\
&=\f{1}{|\bl|}\B[\bn_1C_1(A_2B_2+A_3B_3)+\bn_2C_2(A_1B_1+A_3B_3)+\bn_3C_3(A_1B_1+A_2B_2)\B]\\
&=\f{1}{|\bl|}\B[\bn_1C_1 ({\bf A}\cdot {\bf B})-\bn_1A_1B_1C_1+\bn_2C_2 ({\bf A}\cdot {\bf B})-\bn_2A_2 B_2C_2+\bn_3C_3({\bf A}\cdot {\bf B})-\bn_3A_3B_3C_3\B]\\
&=\f{1}{|\bl|}\B[\bn\cdot {\bf C} ({\bf A}\cdot {\bf B})-\bn_1A_1B_1C_1-\bn_2A_2 B_2C_2-\bn_3A_3B_3C_3\B].
\ea\ee
Case 3:   $i\neq k$ but $i=j$. We compute
\be\label{c29}\ba
I=&\f{1}{|\bl|}(-2\bn_k)A_kB_iC_j\\
=&-\f{2}{|\bl|}\B[\bn_1A_1 (B_2C_2+B_3C_3)+\bn_2A_2 (B_1C_1+B_3C_3)+\bn_3A_3 (B_1C_1+B_2C_2)\B]\\
=&-\f{2}{|\bl|}\B[\bn\cdot {\bf A}({\bf B}\cdot {\bf C})-\bn_1A_1B_1C_1-\bn_2A_2 B_2C_2-\bn_3A_3B_3C_3\B].
\ea\ee
Case 4:   $i\neq k$ but $j=k$.  A straightforward computation leads to
\be\label{c30}\ba
I=&\f{1}{|\bl|}\bn_iA_kB_iC_j\\
=&\f{1}{|\bl|}\B[\bn_1B_1(A_2C_2+A_3C_3)+\bn_2B_2(A_1C_1+A_3C_3)+\bn_3B_3(A_1C_1+A_2C_2)\B]\\
=&\f{1}{|\bl|}\B[\bn\cdot {\bf B}({\bf A}\cdot {\bf C})-\bn_1A_1B_1C_1-\bn_2A_2 B_2C_2-\bn_3A_3B_3C_3\B].
\ea\ee
Case 5:  $i\neq k$, $j\neq k$ and $j\neq i$. It is clear that
\be\label{c31}\ba
I={\bf 0}.
\ea\ee
Thus, combining all above cases together, we have
\be\label{c32}\ba
&\B[\f{\partial}{\partial_{\ell_k}}(\bn_i\bn_j)-(\f{\partial \bn_i}{\partial\ell_j}+\f{\partial \bn_j}{\partial\ell_i})\bn_k\B]A_kB_iC_j\\
=&\f{1}{|\bl|}\bn\cdot\B[{\bf C}({\bf A}\cdot {\bf B})+{\bf B}({\bf A}\cdot {\bf C})-2{\bf A}({\bf B}\cdot {\bf C})\B].
\ea\ee
The proof of this theorem is now completed.
\end{proof}

\section{Four-fifths   laws of total energy and cross-helicity in magnetized fluids}
This section is to prove Theorem \ref{the1.2}.  We will consider      dissipation rates of total energy and cross-helicity in magnetohydrodynamics
flow, respectively.

\subsection{Total energy}
In this subsection, we  present 4/5 law of total energy for the MHD equations.
\begin{proof}[Proof of Theorem \ref{the1.2}: Energy]
On the one hand, following the same path of \eqref{c17}-\eqref{c19}, we can obtain
$$\left\{\ba
& \partial_{t} \bm{v}_{L}^{\varepsilon}\cdot\bm{v} +\text{div} (\bm{v}\otimes \bm{v}_{L})^{\varepsilon}\cdot\bm{v} -\text{div} (\bm{h}\otimes \bm{h}_{L})^{\varepsilon}\cdot\bm{v} +  \nabla\Pi_{L} ^{\varepsilon}\cdot\bm{v}=0, \\
&\partial_{t} \bm{v}\cdot\bm{v}_{L}^{\varepsilon} +\text{div} (\bm{v}\otimes \bm{v})\cdot\bm{v}_{L}^{\varepsilon} -\text{div} (\bm{h}\otimes \bm{h})\cdot\bm{v}_{L}^{\varepsilon} +  \nabla\Pi\cdot \bm{v}_{L} ^{\varepsilon}=0, \\&
\partial_{t} \bm{h}_{L}^{\varepsilon}\cdot\bm{h}+\text{div} (\bm{v}\otimes \bm{h}_{L})^{\varepsilon} \cdot\bm{h} -\text{div} (\bm{ h}\otimes \bm{ v}_{L})^{\varepsilon}\cdot\bm{h}=0,\\&
 \partial_{t} \bm{h}\cdot\bm{h}_{L}^{\varepsilon}+\text{div} (\bm{v}\otimes \bm{h}) \cdot\bm{h}^{\varepsilon}_{L} -\text{div} (\bm{ h}\otimes \bm{ v})\cdot\bm{h}_{L}^{\varepsilon}=0,\\
&\Div \bm{v}_{L}^{\varepsilon}=\Div \bm{h}_{L}^{\varepsilon}=0.
 \ea\right.$$
Then by using the Leibniz formula, it shows  that
  $$\ba
  \partial_{i}(v_{i}v_{L_{j}})^{\varepsilon}v_{j}+  \partial_{i}(v_{i}v_{j})v_{L_{j}}^{\varepsilon}
  =&\partial_{i}[(v_{i}v_{j})v_{L_{j}}^{\varepsilon}]
  +v_j\partial_{i}\B((v_{i}v_{L_{j}})^{\varepsilon}
  -(v_{i}v_{L_{j}}^{\varepsilon})\B),
\\
 -\B[\partial_{i}(h_{i}h_{L_{j}})^{\varepsilon}v_{j}+
 \partial_{i}(h_{i}h_{j})v_{L_{j}}^{\varepsilon}\B]
    =&-\partial_{i}[(h_{i}h_{j})v_{L_{j}}^{\varepsilon}]- \partial_{i}(h_{i}h_{L_{j}})^{\varepsilon}v_{j}+ h_{j}\partial_{i}(h_{i}v_{L_{j}}^{\varepsilon}),
\\
  \partial_{i}(v_{i}h_{L_{j}})^{\varepsilon}h_{j}+  \partial_{i}(v_{i}h_{j})h_{L_{j}}^{\varepsilon}
  =&\partial_{i}[(v_{i}h_{j})h_{L_{j}}^{\varepsilon}]
  +h_j\partial_{i}\B((v_{i}h_{L_{j}})^{\varepsilon}
  -(v_{i}h_{L_{j}}^{\varepsilon})\B),
\\
 -\B[\partial_{i}(h_{i}v_{L_{j}})^{\varepsilon}h_{j}+
 \partial_{i}(h_{i}v_{j})h_{L_{j}}^{\varepsilon}\B]
   =&-\partial_{i}[(h_{i}v_{j})h_{L_{j}}^{\varepsilon}]- \partial_{i}(h_{i}v_{L_{j}})^{\varepsilon}h_{j}+ v_{j}\partial_{i}(h_{i}h_{L_{j}}^{\varepsilon}).
  \ea $$
Hence, we arrive at
\be\ba\label{3.1}
&\partial_{t}(  \bm{v}_{L}^{\varepsilon}\cdot \bm{v} + \bm{h}\cdot \bm{h}_{L}^{\varepsilon})+\s\B[\Pi_L^\varep \bv+\Pi \bv_L^\varep\B]+
\s\B[\bm{v} (\bm{v}\cdot \bm{v}_{L }^{\varepsilon})
- \bm{h}(\bm{h}\cdot \bm{v}_{L }^{\varepsilon})
+ \bm{v}(\bm{h}\cdot \bm{h}_{L }^{\varepsilon})
- \bm{h}(\bm{v} \cdot \bm{h}_{L }^{\varepsilon})\B]
\\
=&-v_j \partial_{i}\B[(v_{i}v_{L_{j}})^{\varepsilon}
  -(v_{i}v_{L_{j}}^{\varepsilon})\B]
+v_j\partial_{i}\B[ (h_{i}h_{L_{j}})^{\varepsilon}- (h_{i}h_{L_{j}}^{\varepsilon})\B] \\&-h_j\partial_{i}\B[(v_{i}h_{L_{j}})^{\varepsilon}
  -(v_{i}h_{L_{j}}^{\varepsilon})\B]+
  h_{j}\partial_{i}\B[(h_{i}v_{L_{j}})^{\varepsilon}- (h_{i}v_{L_{j}}^{\varepsilon})\B].
   \ea \ee
 On the other hand, we derive from   \eqref{2.35} that
 \be\label{3.2}\ba
  &\f12\int_{\mathbb{T}^{3}} \nabla \varphi^\varep(\ell)\cdot \delta \bm{v} [\delta \bm{v}_L]^2+\f{2}{|\bl|}\varphi^\varep(\ell)\bn\cdot \delta \bm{v} [\delta \bm{v}_T]^2 d^3\bl
   \\& +\f12\s \B[\big(\bm{v}(\bm{v}_L\cdot\bm{v}_L)\big)^\varep-\bm{v}(\bm{v}_L\cdot\bm{v}_L)^\varep\B]
   =v_j\partial_k\B[(v_k v_{L_j})^\varep-(v_k v_{L_j}^\varep)\B],
   \ea\ee
and
 \be\label{3.3}\ba
&\f12\int_{\mathbb{T}^{3}} \nabla \varphi^\varep(\ell)\cdot \delta \bm{v} [\delta \bm{h}_L]^2+\f{2}{|\bl|}\varphi^\varep(\ell)\bn\cdot \B[\delta \bm{v} [\delta \bm{h}_T]^2 +\delta \bm{h}(\delta \bm{v} \cdot \delta \bm{h})-\delta \bm{v} (\delta \bm{h}\cdot \delta \bm{h})\B]d^3\bl\\&+\f12
\s \B[\big(\bm{v}(\bm{h}_L\cdot\bm{h}_L)\big)^\varep-\bm{v}(\bm{h}_L\cdot\bm{h}_L)^\varep\B]
\\=& h_j\partial_k\B[(v_k h_{L_j})^\varep-(v_k h_{L_j}^\varep)\B].
\ea\ee
Similarly, making use of \eqref{2.39}, we discover that
    \be\label{3.4}\ba
&\int_{\mathbb{T}^{3}} \nabla \varphi^\varep(\ell)\cdot \delta \bm{h}(\delta \bm{v}_L \cdot \delta \bm{h}_L)+\f{2}{|\bl|}\varphi^\varep(\ell)\bn\cdot \delta \bm{h}(\delta \bm{v}_T\cdot \delta\bm{h}_T)\\&- \f{1}{|\bl|}\varphi^\varep\bn\cdot\B[\delta \bm{h}(\delta \bm{h} \cdot \delta \bm{v})-\delta \bm{v} (\delta \bm{h}\cdot \delta \bm{h})\B] d^3 \bl +\s\B[\big(\bm{h} (\bm{v}_L\cdot \bm{h}_L)\big)^\varep-\bm{h}(\bm{v}_L\cdot \bm{h}_L)^\varep\B]\\&=h_j\partial_k\B[(h_k v_{L_j})^\varep-(h_k v_{L_j}^\varep)\B]+v_i\partial_k\B[(h_k h_{L_i})^\varep-(h_k h_{L_i}^\varep)\B].
\ea\ee
Plugging \eqref{3.2}-\eqref{3.4} into \eqref{3.1}, we end up with
  \be\label{3.3-1}\ba
 &\partial_{t}(  \bm{v}_{L}^{\varepsilon}\cdot \bm{v} + \bm{h}\cdot \bm{h}_{L}^{\varepsilon})+
 \s\B[\bm{v} (\bm{v}\cdot \bm{v}_{L }^{\varepsilon})
 - \bm{h}(\bm{h}\cdot \bm{v}_{L }^{\varepsilon})
 + \bm{v}(\bm{h}\cdot \bm{h}_{L }^{\varepsilon})
 - \bm{h}(\bm{v} \cdot \bm{h}_{L }^{\varepsilon})\B]
\\&+\s\B[\Pi_L^\varep \bv+\Pi \bv_L^\varep\B]
 -\s\B[\B(\bm{h} (\bm{v}_L\cdot \bm{h}_L)\B)^\varep-\bm{h}(\bm{v}_L\cdot \bm{h}_L)^\varep\B]\\& +\f12\s \B[\B(\bm{v}(\bm{v}_L\cdot\bm{v}_L)\B)^\varep-\bm{v}(\bm{v}_L\cdot\bm{v}_L)^\varep\B]+\f12
\s \B[\B(\bm{v}(\bm{h}_L\cdot\bm{h}_L)\B)^\varep
-\bm{v}(\bm{h}_L\cdot\bm{h}_L)^\varep\B]
\\
   =&-\f{2}{3}D_{EL}^{\varepsilon}(\bm{v},\bm{h}),
\ea\ee
where
$$\ba  &D_{EL}^{\varepsilon}(\bm{v},\bm{h})\\
=&\f34\int_{\mathbb{T}^{3}} \nabla \varphi^\varep(\ell)\cdot \delta \bm{v} \B([\delta \bm{v}_L]^2+[\delta \bm{h}_L]^2\B)+\f{2}{|\bl|}\varphi^\varep\bn\cdot \delta \bm{v} \B([\delta \bm{v}_T]^2+[\delta \bm{h}_T]^2\B) d^3\bl
\\
&-\f32\int_{\mathbb{T}^{3}} \nabla \varphi^\varep(\ell)\cdot \delta \bm{h}(\delta \bm{v}_L \cdot \delta \bm{h}_L)+\f{2}{|\bl|}\varphi^\varep\bn\cdot \delta \bm{h}(\delta \bm{v}_T\cdot \delta\bm{h}_T)d^3\bl\\&-3\int_{\mathbb{T}^3} \f{1}{|\bl|}\varphi^\varep\bn\cdot\big(\delta \bm{h}\times(\delta\bm{v}\times \delta\bm{h})\big) d^3 \bl,
  \ea$$
  and the indentity $\delta \bm{h}\times(\delta\bm{v}\times \delta\bm{h})=\delta \bm{v}(\delta \bm{h}\cdot \delta \bm{h})-\delta \bm{h}(\delta\bm{h}\cdot \delta\bm{v})$ has been used.

Proceeding similarly as  the derivation of \eqref{3.1}, we have
\be\ba \label{3.5}
&\partial_{t}(  \bm{v}_{T}^{\varepsilon}\cdot \bm{v} + \bm{h}\cdot \bm{h}_{T}^{\varepsilon})+\s\B[\Pi_T^\varep \bv+\Pi \bv_T^\varep\B]+
\s\B[\bm{v} (\bm{v}\cdot \bm{v}_{T }^{\varepsilon})
- \bm{h}(\bm{h}\cdot \bm{v}_{T }^{\varepsilon})
+ \bm{v}(\bm{h}\cdot \bm{h}_{T }^{\varepsilon})
- \bm{h}(\bm{v} \cdot \bm{h}_{T }^{\varepsilon})\B]\\
=&-v_j \partial_{i}\B[(v_{i}v_{T_{j}})^{\varepsilon}
-(v_{i}v_{T_{j}}^{\varepsilon})\B]
+v_j\partial_{i}\B[ (h_{i}h_{T_{j}})^{\varepsilon}- (h_{i}h_{T_{j}}^{\varepsilon})\B] \\&-h_j\partial_{i}\B[(v_{i}h_{T_{j}})^{\varepsilon}
-(v_{i}h_{T_{j}}^{\varepsilon})\B]+
h_{j}\partial_{i}\B[(h_{i}v_{T_{j}})^{\varepsilon}- (h_{i}v_{T_{j}}^{\varepsilon})\B].
   \ea \ee
In light of  \eqref{2.49},  it gives that
\be\label{3.6}\ba
&\int_{\mathbb{T}^{3}} \nabla \varphi^\varep(\ell)\cdot \delta \bm{h}(\delta \bm{v}_T\cdot \delta \bm{h}_T)-\f{2}{|\bl|}\varphi^\varep\bn\cdot \delta\bm{h} (\delta\bm{v}_T\cdot \delta\bm{h}_T)-\f{1}{|\bl|}\bn\cdot\big[\delta \bm{h}\times (\delta \bv\times \delta \bm{h})\big] d^3\bl \\&+\text{div} \B[\big(\bm{h}(\bm{v}_T\cdot \bm{h}_T)\big)^\varep-\bm{h}(\bm{v}_T\cdot \bm{h}_T)^\varep\B]\\
=&h_j\partial_k\B[(h_k v_{T_j})^\varep-(h_k v_{T_j}^\varep)\B]+v_i\partial_k\B[(h_k h_{T_i})^\varep-(h_k h_{T_i}^\varep)\B].
\ea\ee
It follows from \eqref{2.51} that
\be\label{3.7}\ba
&\f12\int_{\mathbb{T}^{3}} \nabla \varphi^\varep(\ell)\cdot \delta \bm{v}[\delta \bm{v}_T]^2-\f{2}{|\bl|}\varphi^\varep\bn\cdot \delta \bm{v} [\delta \bm{v}_T]^2  d^3\bl +\f12\s \B[\big(\bm{v}(\bm{v}_T\cdot\bm{v}_T)\big)^\varep-\bm{v}(\bm{v}_T\cdot\bm{v}_T)^\varep\B]
\\
=&v_j\partial_k\B[(v_k v_{T_j})^\varep-(v_k v_{T_j}^\varep)\B],
\ea\ee
and
\be\label{3.8}\ba
&\f12\int_{\mathbb{T}^{3}} \nabla \varphi^\varep(\ell)\cdot \delta \bm{v}[\delta \bm{h}_T]^2-\f{2}{|\bl|}\varphi^\varep\bn\cdot \delta \bm{v} [\delta \bm{h}_T]^2  +\f{2}{|\bl|}\bn\cdot\big[\delta \bm{h}\times (\delta \bv\times \delta \bm{h})\big]d^3\bl\\ &+\f12\s \B[\big(\bm{v}(\bm{h}_T\cdot\bm{h}_T)\big)^\varep-\bm{v}(\bm{h}_T\cdot\bm{h}_T)^\varep\B]
\\
=&h_j\partial_k\B[(v_k h_{T_j})^\varep-(v_k h_{T_j}^\varep)\B].
\ea\ee
Inserting \eqref{3.6}-\eqref{3.8} into \eqref{3.5}, we arrive at
\be\label{3.3-2}\ba &\partial_{t}(  \bm{v}_{T}^{\varepsilon}\cdot \bm{v} + \bm{h}\cdot \bm{h}_{T}^{\varepsilon})+
\s\B[\bm{v} (\bm{v}\cdot \bm{v}_{T }^{\varepsilon})
- \bm{h}(\bm{h}\cdot \bm{v}_{T }^{\varepsilon})
+ \bm{v}(\bm{h}\cdot \bm{h}_{T }^{\varepsilon})
- \bm{h}(\bm{v} \cdot \bm{h}_{T }^{\varepsilon})\B]
\\&+\s\B[\Pi_T^\varep \bv+\Pi \bv_T^\varep\B]-\text{div} \B[\B(\bm{h}(\bm{v}_T\cdot \bm{h}_T)\B)^\varep- \bm{h}(\bm{v}_T\cdot \bm{h}_T)^\varep\B]\\& +\f12\s \B[\B(\bm{v}(\bm{v}_T\cdot\bm{v}_T)\B)^\varep-\bm{v}(\bm{v}_T\cdot\bm{v}_T)^\varep\B]+\f12\s \B[\B(\bm{v}(\bm{h}_T\cdot\bm{h}_T)\B)^\varep
-\bm{v}(\bm{h}_T\cdot\bm{h}_T)^\varep\B]\\ 
=&-\f43D^{\varepsilon}_{ET}(\bm{v},\bm{h}),
   \ea \ee
where
$$\ba
D^{\varepsilon}_{ET}(\bm{v},\bm{h})=&\f{3}{8}\int_{\mathbb{T}^{3}} \nabla \varphi^\varep(\ell)\cdot \delta \bm{v}\B[(\delta \bm{v}_T)^2+(\delta \bm{h}_T)^2\B]-\f{2}{|\bl|}\varphi^\varep\bn\cdot \delta \bm{v} \B[(\delta \bm{v}_T)^2 +(\delta \bm{h}_T)^2\B] d^3\bl\\  &-\f34\int_{\mathbb{T}^{3}} \nabla \varphi^\varep(\ell)\cdot \delta \bm{h}(\delta \bm{v}_T\cdot \delta \bm{h}_T)-\f{2}{|\bl|}\varphi^\varep\bn\cdot \delta\bm{h} (\delta\bm{v}_T\cdot \delta\bm{h}_T)d^3\bl\\
&+\f32\int_{\mathbb{T}^3}\f{1}{|\bl|}\bn\cdot
\big[\delta \bm{h}\times (\delta \bv\times \delta \bm{h})\big] d^3\bl.
\ea$$
An argument similar to the proof of Theorem \ref{the01} in Step 2 shows that the left-hand side of \eqref{3.3-1} and \eqref{3.3-2} converge to
\begin{equation}
	\f13\B[\partial_t([\bv]^2+[\bm{h}]^2)+\s(\Pi \bv)+\s\B(\bv[\bv]^2-2\bm{h}(\bm{h}\cdot\bv)+\bv[\bm{h}]^2\B)\B]=:-\f23 D_E(\bv,\bm{h}),
\end{equation}
and
\begin{equation}
	\f23\B[\partial_t([\bv]^2+[\bm{h}]^2)+\s(\Pi \bv)+\s\B(\bv[\bv]^2-2\bm{h}(\bm{h}\cdot\bv)+\bv[\bm{h}]^2\B)\B]=:-\f43 D_E(\bv,\bm{h}),
\end{equation}
respectively, in the sense of distributions. Namely, we have
\be
 D^\varep_{EX}(\bv,\bm{h})\to D_{E}(\bv,\bm{h}),\ee
for both $X=L,T$, as $\varep\to 0$, in the sense of distributions.

To proceed further, we set
$$\ba
&\overline{S}_{EL}(\bm{v}, \bm{h},\lambda)
=\f{1}{\lambda}\int_{\partial B } \bn \cdot \B[  \delta \bm{v} (\lambda\bl)\big([\delta \bm{v}_L(\lambda\bl)]^2+[\delta \bm{h}_L(\lambda\bl)]^2\big)-2\delta \bm{h}(\lambda\bl)\big(\delta \bm{v}_L (\lambda\bl)\cdot \delta \bm{h}_L(\lambda\bl)\big)\B] \f{d\sigma(\bl) }{4\pi},\\
&\overline{S}_{ET}(\bm{v}, \bm{h},\lambda )=\f{1}{\lambda}\int_{\partial B } \bn \cdot \B[  \delta \bm{v} (\lambda\bl)\big([\delta \bm{v}_T(\lambda\bl)]^2+[\delta \bm{h}_T(\lambda\bl)]^2\big)-2\delta \bm{h}(\lambda\bl)\big(\delta \bm{v}_T (\lambda\bl)\cdot \delta \bm{h}_T(\lambda\bl)\big)\B] \f{d\sigma(\bl) }{4\pi},\\
&\overline{S}_{E} (\bm{v}, \bm{h},\lambda)=\f{1}{\lambda}\int_{\partial B } \bn \cdot\B[\delta \bm{h}(\lambda\bl)\times\big(\delta\bm{v}(\lambda\bl)\times \delta\bm{h}(\lambda\bl)\big)\B] \f{d\sigma(\bl) }{4\pi}.
\ea$$
A simple computation leads to
\be\ba\label{3.9}
D_{E}(\bm{v},\bm{h})=&\lim_{\varepsilon\rightarrow0}D^{\varepsilon}_{ET}(\bm{v},\bm{h})\\
=&\lim_{\varepsilon\rightarrow0}\f{3}{8}\int_{\mathbb{T}^{3}}\B[\nabla \varphi^\varep(\ell)-\f{2}{|\bl|}\varphi^\varep\bn\B]\cdot \B[\delta \bm{v} [\delta \bm{v}_T]^2+   \delta \bm{v} [\delta \bm{h}_T]^2-2 \delta\bm{h} (\delta\bm{v}_T\cdot \delta\bm{h}_T)\B]   d^3\bl\\ &+\f32\lim_{\varepsilon\rightarrow0}\int_{\mathbb{T}^{3}}\f{1}{|\bl|}\bn\cdot
[\delta \bm{h}\times(\delta\bm{v}\times \delta\bm{h})]
d^3\bl \\ 
=&-\f{15}{8}\B(\bar{S}_{ET}(\bm{v}, \bm{h})-\f45\bar{S}_{E}(\bm{v}, \bm{h})\B)\\
=&-\f{15}{8} S_{ET}(\bm{v}, \bm{h}),
\ea\ee
and
$$\ba
D_{E}(\bm{v},\bm{h})=&\lim_{\varepsilon\rightarrow0}D_{EL}^{\varepsilon}(\bm{v},\bm{h})
\\ 
 =&\f34\lim_{\varepsilon\rightarrow0}\int_{\mathbb{T}^{3}} \nabla \varphi^\varep(\ell)\cdot \B[\delta \bm{v} \big([\delta \bm{v}_L]^2+[\delta \bm{h}_L]^2\big)-2\delta \bm{h}(\delta \bm{v}_L \cdot \delta \bm{h}_L)\B]d^3\bl\\&+\f34\lim_{\varepsilon\rightarrow0}\int_{\mathbb{T}^{3}}\f{2}{|\bl|}\varphi^\varep\bn\cdot \B[\delta \bm{v} \big([\delta \bm{v}_T]^2+[\delta \bm{h}_T]^2\big)-2 \delta \bm{h}(\delta \bm{v}_T\cdot \bm{h}_T)\B] d^3\bl\\
 &-3 \lim_{\varepsilon\rightarrow0}\int_{\mathbb{T}^{3}} \f{1}{|\bl|}\varphi^\varep\bn\cdot\B[\delta \bm{h}\times(\delta \bm{v} \times\delta \bm{h}) \B] d^3 \bl\\=&\f34\int_0^\infty r^3\varphi^{'}(r)dr 4\pi \bar {S}_{EL}(\bm{v}, \bm{h})+\f34 \lim_{\varepsilon\rightarrow0}\int_0^\infty 2\varphi(r) r^2 dr 4\pi \bar{S}_{ET}(\bm{v}, \bm{h})\\&-3\int_0^\infty r^2\varphi(r) dr 4\pi \bar{S}_{E}(\bm{v}, \bm{h})\\
=&-\f94 \bar {S}_{EL}(\bm{v}, \bm{h})+\f32 \bar{S}_{ET}(\bm{v}, \bm{h})-3\bar{S}_{E}(\bm{v}, \bm{h}).
  \ea$$
This together with \eqref{3.9} yields that
\be\label{3.10}\left\{\ba
&D_{E}(\bv,\bm{h})=-\f{15}{8} \bar{S}_{ET}(\bm{v}, \bm{h})+\f32\bar{S}_{E}(\bm{v}, \bm{h}),\\
&D_{E}(\bv,\bm{h})=-\f94 \bar {S}_{EL}(\bm{v}, \bm{h})+\f32 \bar{S}_{ET}(\bm{v}, \bm{h})-3\bar{S}_{E}(\bm{v}, \bm{h}),
 \ea\right.\ee
which turns out that
\be\label{3.11}\ba
D_{E}(\bv,\bm{h})=-\f54\bar{S}_{EL}(\bm{v}, \bm{h})-\bar{S}_{E}(\bm{v}, \bm{h})=-\f54\B(\bar{S}_{EL}(\bm{v}, \bm{h})+\f45\bar{S}_E(\bm{v}, \bm{h})\B)=-\f54 S_{EL}.
\ea\ee
 We derive from \eqref{3.9} and \eqref{3.11} that  $$S_{ET}(\bv,\bm{h})=-\f{8}{15}D_{E}(\bv,\bm{h}),~~S_{EL}(\bv,\bm{h})=-\f{4}{5}D_{E}(\bv,\bm{h}),$$
 where
 $$\ba S_{EL}&= \lim_{\lambda \to 0}\f{1}{\lambda}\int_{\partial B } \bn \cdot \B[  \delta \bm{v} (\lambda\bl)\big([\delta \bm{v}_L(\lambda\bl)]^2+[\delta \bm{h}_L(\lambda\bl)]^2\big)-2\delta \bm{h}(\lambda\bl)\big(\delta \bm{v}_L (\lambda\bl)\cdot \delta \bm{h}_L(\lambda\bl)\big)\B] \f{d\sigma(\bl) }{4\pi}\\
 &+\f45\lim_{\lambda \to 0}\f{1}{\lambda}\int_{\partial B } \bn \cdot\B[\delta \bm{h}(\lambda\bl)\times\big(\delta\bm{v}(\lambda\bl)\times \delta\bm{h}(\lambda\bl)\big)\B] \f{d\sigma(\bl) }{4\pi},\ea$$
 and
 $$\ba
 S_{ET}&=\lim_{\lambda \to 0}\f{1}{\lambda}\int_{\partial B } \bn \cdot \B[  \delta \bm{v} (\lambda\bl)\big([\delta \bm{v}_T(\lambda\bl)]^2+[\delta \bm{h}_T(\lambda\bl)]^2\big)-2\delta \bm{h}(\lambda\bl)\big(\delta \bm{v}_T (\lambda\bl)\cdot \delta \bm{h}_T(\lambda\bl)\big)\B] \f{d\sigma(\bl) }{4\pi}\\
 &-\f45 \lim_{\lambda \to 0}\f{1}{\lambda}\int_{\partial B } \bn \cdot\B[\delta \bm{h}(\lambda\bl)\times\big(\delta\bm{v}(\lambda\bl)\times \delta\bm{h}(\lambda\bl)\big)\B] \f{d\sigma(\bl) }{4\pi}.
 \ea$$
 Thus,
the proof of four-fifths   laws of total energy  in inviscid MHD equations is completed.
\end{proof}
\subsection{Cross-helicity  }
In this subsection, we consider Kolmogorov' 4/5 law  of cross-helicity in terms of mixed  velocity field and  magnetic field in MHD turbulence.

\begin{proof}[Theorem \ref{the1.2}: Cross-helicity]

A slight modification of the proof in \eqref{c17}-\eqref{c19} gives that
$$\left\{\ba
&  \partial_{t} \bm{v}_{L}^{\varepsilon}\cdot\bm{h} +\text{div} (\bm{v}\otimes \bm{v}_{L})^{\varepsilon}\cdot\bm{h} -\text{div} (\bm{h}\otimes \bm{h}_{L})^{\varepsilon}\cdot\bm{h} +  \nabla\Pi_{L} ^{\varepsilon}\cdot\bm{h}=0, \\
&
\partial_{t} \bm{h}\cdot\bm{v}_{L}^{\varepsilon}+\text{div} (\bm{v}\otimes \bm{h}) \cdot\bm{v}^{\varepsilon}_{L} -\text{div} (\bm{ h}\otimes \bm{ v})\cdot\bm{v}_{L}^{\varepsilon}=0,\\
&
 \partial_{t} \bm{h}_{L}^{\varepsilon}\cdot\bm{v}+\text{div} (\bm{v}\otimes \bm{h}_{L})^{\varepsilon} \cdot\bm{v} -\text{div} (\bm{ h}\otimes \bm{ v}_{L})^{\varepsilon}\cdot\bm{v}=0,\\
 & \partial_{t} \bm{v}\cdot\bm{h}_{L}^{\varepsilon} +\text{div} (\bm{v}\otimes \bm{v})\cdot\bm{h}_{L}^{\varepsilon} -\text{div} (\bm{h}\otimes \bm{h})\cdot\bm{h}_{L}^{\varepsilon} +  \nabla\Pi\cdot\bm{h}_{L} ^{\varepsilon}=0, \\
&\Div \bm{v}_{L}^{\varepsilon}=\Div \bm{h}_{L}^{\varepsilon}=0.
 \ea\right.$$
  It is easy to check that
  $$\ba
  &\partial_{i}(v_{i}v_{L_{j}})^{\varepsilon}h_{j}+  \partial_{i}(v_{i}h_{j})v_{L_{j}}^{\varepsilon}
   =\partial_i(v_ih_j v_{L_j}^\varep)+h_j\partial_{i}\B[(v_{i}v_{L_{j}})^{\varepsilon}-(v_i v_{L_j}^\varep)\B],\\
& -\B[\partial_{i}(h_{i}h_{L_{j}})^{\varepsilon}h_{j}+
 \partial_{i}(h_{i}v_{j})v_{L_{j}}^{\varepsilon}\B]
    =-\partial_{i}(h_{i}v_{j}v_{L_{j}}^{\varepsilon})+ v_j\partial_{i}(h_{i}v_{L_{j}}^{\varepsilon})- h_{j}\partial_{i}(h_{i}h_{L_{j}})^{\varepsilon},\\
   &\partial_{i}(v_{i}h_{L_{j}})^{\varepsilon}v_{j}+  \partial_{i}(v_{i}v_{j})h_{L_{j}}^{\varepsilon}
 =\partial_{i}(v_{i}v_{j}h_{L_{j}}^{\varepsilon})
  +v_j\partial_{i}\B[(v_{i}h_{L_{j}})^{\varepsilon}
  -(v_{i}h_{L_{j}}^{\varepsilon})\B],
 \\
  &-\B[\partial_{i}(h_{i}v_{L_{j}})^{\varepsilon}v_{j}+
 \partial_{i}(h_{i}h_{j})h_{L_{j}}^{\varepsilon}\B]
   =-\partial_{i}(h_{i}h_{j}h_{L_{j}}^{\varepsilon})+ h_j\partial_{i}(h_{i}h_{L_{j}}^{\varepsilon})- v_{j}\partial_{i}(h_{i}v_{L_{j}})^{\varepsilon}.  \ea $$
This in turn means that
\be\ba\label{4.1}
& \partial_{t} ( {\bv}_{L}^{\varepsilon}\cdot \bm{h}+ {\bv}\cdot \bm{h}_{L}^{\varepsilon} )+\s(\Pi^\varep_L \bm{h}+\bm{h}_L^\varep \Pi)+\s
\B[\bv(\bm{h}\cdot\bm{v}_L^\varep)
+\bv(\bv\cdot\bm{h}_L^\varep)-\bm{h}(\bv\cdot\bv_L^\varep)
-\bm{h}(\bm{h}\cdot\bm{h}_L^\varep)\B]\\
  =&-h_j\partial_{i}\B[(v_{i}v_{L_{j}})^{\varepsilon}-(v_i v_{L_j}^\varep)\B]-v_j\partial_{i}\B[(v_{i}h_{L_{j}})^{\varepsilon}
  -(v_{i}h_{L_{j}}^{\varepsilon})\B]+h_j\partial_i\B[(h_ih_{L_j})^\varep-(h_i h_{L_j}^\varep)\B]\\
  &+v_j\partial_i\B[(h_i v_{L_j})^\varep-(h_i v_{L_j}^\varep)\B].
    \ea \ee
Repeating the derivation of \eqref{2.35}, we have
\be\label{4.2}\ba
  &\f12\int_{\mathbb{T}^{3}} \nabla \varphi^\varep(\ell)\cdot \delta \bm{h} [\delta \bm{h}_L]^2+\f{2}{|\bl|}\varphi^\varep\bn\cdot \delta \bm{h} [\delta \bm{h}_T]^2 d^3\bl
 \\&+\f12\s \B[\big(\bm{h}(\bm{h}_L\cdot\bm{h}_L)\big)^\varep-\bm{h}(\bm{h}_L\cdot\bm{h}_L)^\varep\B]
  \\=& h_j\partial_k\B[(h_k h_{L_j})^\varep-(h_k h_{L_j}^\varep)\B],
   \ea\ee
and
 \be\label{4.3}\ba
&\f12\int_{\mathbb{T}^{3}} \nabla \varphi^\varep(\ell)\cdot \delta \bm{h} [\delta \bm{v}_L]^2+\f{2}{|\bl|}\varphi^\varep\bn\cdot \B[\delta \bm{h}[\delta \bm{v}_T]^2 +\delta \bm{v}(\delta \bm{h} \cdot \delta \bm{v})-\delta \bm{h} (\delta \bm{v}\cdot \delta \bm{v})\B]d^3\bl\\&+\f12
\s \B[\big(\bm{h}(\bm{v}_L\cdot\bm{v}_L)\big)^\varep-\bm{h}(\bm{v}_L\cdot\bm{v}_L)^\varep\B]
\\=& v_j\partial_k\B[(h_k v_{L_j})^\varep-(h_k v_{L_j}^\varep)\B].
\ea\ee
Moreover, along the same line of \eqref{2.39}, we get
\be\label{4.4}\ba
&\int_{\mathbb{T}^{3}} \nabla \varphi^\varep(\ell)\cdot \delta \bm{v}(\delta \bm{h}_L \cdot \delta \bm{v}_L)+\f{2}{|\bl|}\varphi^\varep\bn\cdot \delta \bm{v}(\delta \bm{h}_T\cdot \delta\bm{v}_T)\\&- \f{1}{|\bl|}\varphi^\varep\bn\cdot\B[\delta \bm{v}(\delta \bm{v} \cdot \delta \bm{h})-\delta \bm{h} (\delta \bm{v}\cdot \delta \bm{v})\B] d^3 \bl +\s\B[\big(\bv (\bm{h}_L\cdot \bm{v}_L)\big)^\varep-\bv(\bm{h}_L\cdot \bm{v}_L)^\varep\B]\\&=h_j\partial_k\B[(v_k v_{L_j})^\varep-(v_k v_{L_j}^\varep)\B]+v_i\partial_k\B[(v_k h_{L_i})^\varep-(v_k h_{L_i}^\varep)\B].
\ea\ee
Inserting \eqref{4.2}-\eqref{4.4} into \eqref{4.1}, we conclude that
\be\label{4.5}\ba
&  \partial_{t} ( {\bv}_{L}^{\varepsilon}\cdot \bm{h}+ {\bv}\cdot \bm{h}_{L}^{\varepsilon} )+\s\B(\bv(\bm{h}\cdot\bm{v}_L^\varep)+\bv(\bv\cdot\bm{h}_L^\varep)-\bm{h}(\bv\cdot\bv_L^\varep)-\bm{h}(\bm{h}\cdot\bm{h}_L^\varep)\B)\\
&+\s(\Pi^\varep_L \bm{h}+\bm{h}_L^\varep \Pi)+\s\B[\big(\bv (\bm{h}_L\cdot \bm{v}_L)\big)^\varep-\bv(\bm{h}_L\cdot \bm{v}_L)^\varep\B]
\\&-\f12\s\B[\big(\bm{h}(\bm{h}_L\cdot\bm{h}_L)\big)^\varep -\bm{h}(\bm{h}_L\cdot\bm{h}_L)^\varep\B]-\f12\s\B[\big(\bm{h}(\bm{v}_L\cdot\bm{v}_L)\big)^\varep-\bm{h}(\bm{v}_L\cdot\bm{v}_L)^\varep\B]\\
   =&-\f23D_{CHL}^{\varepsilon}(\bm{v},\bm{h})
\ea\ee
where
$$\ba
&D_{CHL}^{\varepsilon}(\bm{v},\bm{h})\\
=&\f32\int_{\mathbb{T}^{3}} \nabla \varphi^\varep(\ell)\cdot \delta \bm{v}(\delta \bm{h}_L \cdot \delta \bm{v}_L)+\f{2}{|\bl|}\varphi^\varep\bn\cdot \B[\delta \bm{v}(\delta \bm{h}_T\cdot \delta \bm{v}_T)+\delta \bm{v}\times(\delta \bm{h} \times \delta \bm{v})\B] d^3 \bl \\&-  \f{3}{4}\int_{\mathbb{T}^{3}} \nabla \varphi^\varep(\ell)\cdot \delta \bm{h}\B( [\delta \bm{h}_L]^2+[\delta \bm{v}_L]^2\B)+\f{2}{|\bl|}\varphi^\varep\bn\cdot \delta \bm{h} \B([\delta \bm{h}_T]^2+[\delta \bm{b}_T]^2\B) d^3\bl,
\ea$$
and the indentity $\delta \bm{v}\times(\delta \bm{h} \times \delta \bm{v})=\delta \bm{h} (\delta \bm{v}\cdot \delta \bm{v})-\delta \bm{v}(\delta \bm{h} \cdot \delta \bm{v})$ has been used.

On the other hand, following the path of \eqref{4.1}, we infer that
\be\label{4.6}\ba
 & \partial_{t} ( {\bv}_{T}^{\varepsilon}\cdot \bm{h}+ {\bv}\cdot \bm{h}_{T}^{\varepsilon} )+\s(\Pi^\varep_T \bm{h}+\bm{h}_T^\varep \Pi)+\s\B[\bv(\bm{h}\cdot\bm{v}_T^\varep)
  +\bv(\bv\cdot\bm{h}_T^\varep)-\bm{h}(\bv\cdot\bv_T^\varep)
  -\bm{h}(\bm{h}\cdot\bm{h}_T^\varep)\B]\\
  =&-h_j\partial_{i}\B[(v_{i}v_{T_{j}})^{\varepsilon}-(v_i v_{T_j}^\varep)\B]-v_j\partial_{i}\B[(v_{i}h_{T_{j}})^{\varepsilon}
  -(v_{i}h_{T_{j}}^{\varepsilon})\B]+h_j\partial_i\B[(h_ih_{T_j})^\varep-(h_i h_{T_j}^\varep)\B]\\
  &+v_j\partial_i\B[(h_i v_{T_j})^\varep-(h_i v_{T_j}^\varep)\B].
    \ea \ee
Arguing as \eqref{2.51}, we remark that
  \be\label{4.7}\ba
&\f12\int_{\mathbb{T}^{3}} \nabla \varphi^\varep(\ell)\cdot \delta \bm{h}[\delta \bm{h}_T]^2-\f{2}{|\bl|}\varphi^\varep\bn\cdot \delta \bm{h} [\delta \bm{h}_T]^2 d^3\bl\\ &+\f12\s \B[\big(\bm{h}(\bm{h}_T\cdot\bm{h}_T)\big)^\varep-\bm{h}(\bm{h}_T\cdot\bm{h}_T)^\varep\B]
\\
=&h_j\partial_k\B[(h_k h_{T_j})^\varep-(h_k h_{T_j}^\varep)\B],
\ea\ee
and
\be\label{4.8}\ba
&\f12\int_{\mathbb{T}^{3}} \nabla \varphi^\varep(\ell)\cdot \delta \bm{h}[\delta \bm{v}_T]^2-\f{2}{|\bl|}\varphi^\varep\bn\cdot \delta \bm{h} [\delta \bm{v}_T]^2 +\f{2}{|\bl|}\varphi^\varep\bn\cdot\big[ \delta \bv \times (\delta \bm{h}\times \delta \bv)\big]d^3\bl\\ &+\f12\s \B[\big(\bm{h}(\bm{v}_T\cdot\bm{v}_T)\big)^\varep-\bm{h}(\bm{v}_T\cdot\bm{v}_T)^\varep\B]
\\
=&v_j\partial_k\B[(h_k v_{T_j})^\varep-(h_k v_{T_j}^\varep)\B].
\ea\ee
By suitable modification of \eqref{2.49}, one arrives at
  \be\label{4.9}\ba
&\int_{\mathbb{T}^{3}} \nabla \varphi^\varep(\ell)\cdot \delta \bm{v}(\delta \bm{h}_T\cdot \delta \bm{v}_T)-\f{2}{|\bl|}\varphi^\varep\bn\cdot \delta\bm{v} (\delta\bm{h}_T\cdot \delta\bm{v}_T)-\f{1}{|\bl|}\varphi^\varep\bn\cdot \big[\delta \bv \times (\delta \bm{h}\times \delta\bv)\big] d^3\bl \\&+\text{div} \B[\big(\bm{v}(\bm{h}_T\cdot \bm{v}_T)\big)^\varep-\bm{v}(\bm{h}_T\cdot \bm{v}_T)^\varep\B]\\
=&v_j\partial_k\B[(v_k h_{T_j})^\varep-(v_k h_{T_j}^\varep)\B]+h_i\partial_k\B[(v_k v_{T_i})^\varep-(v_k v_{T_i}^\varep)\B].
\ea\ee
Substituting \eqref{4.7}-\eqref{4.9} into  \eqref{4.6}, it follows that
 \be\label{4.9-1}\ba & \partial_{t} ( {\bv}_{T}^{\varepsilon}\cdot \bm{h}+ {\bv}\cdot \bm{h}_{T}^{\varepsilon} )+\s\B[\bv(\bm{h}\cdot\bm{v}_T^\varep)+\bv(\bv\cdot\bm{h}_T^\varep)-\bm{h}(\bv\cdot\bv_T^\varep)-\bm{h}(\bm{h}\cdot\bm{h}_T^\varep)\B]\\
&+\s(\Pi^\varep_T \bm{h}+\bm{h}_T^\varep \Pi)+\text{div} \B[\big(\bm{v}(\bm{h}_T\cdot \bm{v}_T)\big)^\varep-\bm{v}(\bm{h}_T\cdot \bm{v}_T)^\varep\B]\\
&-\f12\s \B[\big(\bm{h}(\bm{h}_T\cdot\bm{h}_T)\big)^\varep-\bm{h}(\bm{h}_T\cdot\bm{h}_T)^\varep\B]-\f12\s \B[\big(\bm{h}(\bm{v}_T\cdot\bm{v}_T)\big)^\varep-\bm{h}(\bm{v}_T\cdot\bm{v}_T)^\varep\B]\\
  =&-\f43D_{CHT}^{\varepsilon}(\bm{v},\bm{h}),
    \ea \ee
where
  $$\ba
&D_{CHT}^{\varepsilon}(\bm{v},\bm{h})\\
     =&\f34\int_{\mathbb{T}^{3}} \nabla \varphi^\varep(\ell)\cdot \delta \bm{v}(\bl)(\delta \bm{h}_T\cdot \delta \bm{v}_T)-\f{2}{|\bl|}\varphi^\varep\bn\cdot \B[\delta\bm{v} (\delta\bm{h}_T\cdot \delta\bm{v}_T)+\delta \bm{v}\times(\delta\bm{h}\times \delta\bm{v}) \B] d^3\bl \\
  &- \f{3}{8}\int_{\mathbb{T}^{3}} \nabla \varphi^\varep(\ell)\cdot \delta \bm{h}\B([\delta \bm{h}_T]^2+[\delta \bv_T]^2\B)-\f{2}{|\bl|}\varphi^\varep\bn\cdot \delta \bm{h} \B([\delta \bm{h}_T]^2 +[\delta \bv_T]^2\B)d^3\bl.
  \ea $$

Next, proceeding  similarly as \eqref{2.54}-\eqref{2.57-1} in Section 2 shows that the left-hand side of \eqref{4.5} and \eqref{4.9-1} converge to
\begin{equation}
	\f23\B[\partial_t(\bv \cdot \bm{h})+\s(\Pi \bv)+\s\B(\bv(\bv\cdot \bm{h})-\f12 \bm{h}|\bv]^2-\f12\bm{h}|\bm{h}]^2\B)\B]=:-\f23 D_{CH}(\bv,\bm{h}),
\end{equation}
and
\begin{equation}
	\f43\B[\partial_t(\bv \cdot \bm{h})+\s(\Pi \bv)+\s\B(\bv(\bv\cdot \bm{h})-\f12 \bm{h}|\bv]^2-\f12\bm{h}|\bm{h}]^2\B)\B]=:-\f43 D_{CH}(\bv,\bm{h})
\end{equation}
respectively, in the sense of distributions. In other words, we have
\be
D^\varep_{CHX}(\bv,\bm{h})\to D_{CH}(\bv,\bm{h}),\ee
for both $X=L,T$, as $\varep\to 0$, in the sense of distributions.

Finally, to show the scaling law of cross-helicity, we let
$$\ba
&\overline{S}_{CHL}(\bm{v}, \bm{h})\\=&\lim_{\lambda \to0}\f{1}{\lambda}\int_{\partial B } \bn \cdot \B[ 2\delta\bm{v}(\lambda\bl) (\delta\bm{h}_L(\lambda\bl)\cdot \delta\bm{v}_L(\lambda\bl))- \delta \bm{h}(\lambda\bl) [\delta \bm{v}_L(\lambda\bl)]^2- \delta \bm{h} (\lambda\bl)[\delta \bm{h}_L(\lambda\bl)]^2\B] \f{d\sigma(\bl) }{4\pi},\\
&\overline{S}_{CHT}(\bm{v}, \bm{h})\\=&\lim_{\lambda \to0}\f{1}{\lambda}\int_{\partial B } \bn \cdot \B[ 2\delta\bm{v}(\lambda\bl) (\delta\bm{h}_T(\lambda\bl)\cdot \delta\bm{v}_T(\lambda\bl))- \delta \bm{h}(\lambda\bl) [\delta \bm{v}_T(\lambda\bl)]^2- \delta \bm{h} (\lambda\bl)[\delta \bm{h}_T(\lambda\bl)]^2\B] \f{d\sigma(\bl) }{4\pi},\\
&\overline{S}_{ CH} (\bm{v}, \bm{h})\\=&\lim_{\lambda \to0}\f{1}{\lambda}\int_{\partial B } \bn \cdot\B[\delta \bm{v}(\lambda\bl)\times(\delta\bm{h}(\lambda\bl)\times \delta\bm{v}(\lambda\bl))\B] \f{d\sigma(\bl) }{4\pi}.
\ea$$
  By a straightforward computation,  it follows that
   \be\ba\label{4.10}
 &D_{CH} (\bm{v},\bm{h})
 =\lim_{\varepsilon\rightarrow0}D_{CHT}^{\varepsilon}(\bm{v},\bm{h})\\
 =&\f38\lim_{\varepsilon\rightarrow0}\int_{\mathbb{T}^{3}}\B[ \nabla \varphi^\varep(\ell) -\f{2}{|\bl|}\varphi^\varep\bn\B] \cdot\B[2\delta\bm{v} (\delta\bm{h}_T\cdot \delta\bm{v}_T)- \delta \bm{h} [\delta \bm{v}_T]^2- \delta \bm{h} [\delta \bm{h}_T]^2 \B]d^3\bl\\&-\f32\lim_{\varepsilon\rightarrow0}\int_{\mathbb{T}^{3}}\f{1}{|\bl|}\varphi^\varep\bn\cdot\B[\delta \bm{v}\times(\delta\bm{h}\times\delta\bm{v}) \B] d^3\bl    \\=&\f38\B(\int_0^\infty r^3\varphi^{'}(r)-2r^2\varphi(r)dr\B)4\pi \bar{S}_{CHT}(\bm{v}, \bm{h})-\f32\int_0^\infty r^2\varphi(r)dr 4\pi\bar{S}_{CH}(\bm{v}, \bm{h})\\ 
=&-\f{15}{8}\B(\bar{S}_{CHT}(\bm{v}, \bm{h})+\f45\bar{S}_{CH}(\bm{v}, \bm{h})\B)\\
=&-\f{15}{8} S_{CHT}(\bm{v}, \bm{h}),
  \ea \ee
and
\be
\begin{aligned}
 &D_{CH} (\bm{v},\bm{h})
 =\lim_{\varep\to0}D_{CHL} ^\varep(\bm{v},\bm{h}) \\
    =&\f34\lim_{\varep\to0}\int_{\mathbb{T}^{3}} \nabla \varphi^\varep(\ell)\cdot\B[2\delta \bm{v}(\delta \bm{h}_L \cdot \delta \bm{v}_L)-\delta \bm{h} [\delta \bm{h}_L]^2-\delta \bm{h} [\delta \bm{v}_L]^2\B] d^3 \bl \\
    &+\f34\lim_{\varep\to0}\int_{\mathbb{T}^{3}}\f{2}{|\bl|}\varphi^\varep(\ell)\bn\cdot \B[2\delta \bm{v}(\delta \bm{h}_T \cdot \delta \bm{v}_T)-\delta \bm{h} [\delta \bm{h}_T]^2-\delta \bm{h} [\delta \bm{v}_T(\bl)]^2\B] d^3 \bl \\&+3 \lim_{\varep\to0}\int_{\mathbb{T}^{3}} \f{1}{|\bl|}\varphi^\varep\bn\cdot\B[\delta \bm{v}\times(\delta \bm{h} \times\delta \bm{v})\B] d^3 \bl \\
     =&\f34\int_0^\infty r^3\varphi^{'}(r)dr 4\pi \bar {S}_{CHL}(\bm{v}, \bm{h})+\f34 \int_0^\infty 2\varphi(r) r^2 dr 4\pi \bar{S}_{CHT}(\bm{v}, \bm{h})\\&+3\int_0^\infty r^2\varphi(r) dr 4\pi \bar{S}_{CH}(\bm{v}, \bm{h})\\
=&-\f94 \bar {S}_{CHL}(\bm{v}, \bm{h})+\f32 \bar{S}_{CHT}(\bm{v}, \bm{h})+3\bar{S}_{CH}(\bm{v}, \bm{h}).
\end{aligned}\ee
Combining this  and \eqref{4.10}, we infer that, as $\varep\to0$,
\be\label{4.11}\left\{\ba
&D_{CH}(\bv,\bm{h})=-\f{15}{8} \bar{S}_{CHT}(\bm{v}, \bm{h})-\f32\bar{S}_{CH}(\bm{v}, \bm{h}),\\
&D_{CH}(\bv,\bm{h})=-\f94 \bar {S}_{CHL}(\bm{v}, \bm{h})+\f32 \bar{S}_{CHT}(\bm{v}, \bm{h})+3\bar{S}_{CH}(\bm{v}, \bm{h}),
 \ea\right.\ee
 which imples
\be\label{4.12}\ba
D_{CH}(\bv,\bm{h})=&-\f54\bar{S}_{CHL}(\bm{v}, \bm{h})+\bar{S}_{CH}(\bm{v}, \bm{h})\\
=&-\f54\B[\bar{S}_{CHL}(\bm{v}, \bm{h})-\f45\bar{S}_{CH}(\bm{v}, \bm{h})\B]\\
=&-\f54 S_{CHL}(\bm{v}, \bm{h}).
\ea\ee
 Hence, according to \eqref{4.10} and \eqref{4.12}, we deduce the 4/5 law and 8/15 law of cross-helicity in magnetied fluids
$$S_{C H L}(\bm{v}, \bm{h})=-\f{4}{5}D_{CH}(\bm{v}, \bm{h}),~\text{and}~S_{CHT}(\bm{v}, \bm{h})=-\f{8}{15}D_{CH}(\bm{v}, \bm{h}),$$
where
\be\ba
&S_{CHL}(\bm{v}, \bm{h})\\=&\lim_{\lambda \to0}\f{1}{\lambda}\int_{\partial B } \bn \cdot \B[ 2\delta\bm{v}(\lambda\bl) \big(\delta\bm{h}_L(\lambda\bl)\cdot \delta\bm{v}_L(\lambda\bl)\big)- \delta \bm{h}(\lambda\bl) [\delta \bm{v}_L(\lambda\bl)]^2- \delta \bm{h} (\lambda\bl)[\delta \bm{h}_L(\lambda\bl)]^2\B] \f{d\sigma(\bl) }{4\pi}\\
&-\f45\lim_{\lambda \to0}\f{1}{\lambda}\int_{\partial B } \bn \cdot\B[\delta \bm{v}(\lambda\bl)\times(\delta\bm{h}(\lambda\bl)\times \delta\bm{v}(\lambda\bl))\B] \f{d\sigma(\bl) }{4\pi},
\ea\ee
and
\be\ba
&S_{CHT}(\bm{v}, \bm{h})\\=& \lim_{\lambda \to0}\f{1}{\lambda}\int_{\partial B } \bn \cdot \B[ 2\delta\bm{v}(\lambda\bl) \big(\delta\bm{h}_T(\lambda\bl)\cdot \delta\bm{v}_T(\lambda\bl)\big)- \delta \bm{h}(\lambda\bl) [\delta \bm{v}_T(\lambda\bl)]^2- \delta \bm{h} (\lambda\bl)[\delta \bm{h}_T(\lambda\bl)]^2\B] \f{d\sigma(\bl) }{4\pi}\\
&+\f45\lim_{\lambda \to0}\f{1}{\lambda}\int_{\partial B } \bn \cdot\B[\delta \bm{v}(\lambda\bl)\times(\delta\bm{h}(\lambda\bl)\times \delta\bm{v}(\lambda\bl))\B] \f{d\sigma(\bl) }{4\pi}.
\ea\ee
The proof of Theorem 1.2 is finished.
\end{proof}
\section{Conclusion}

One accurate result of the well-known K41 theory is four-fifths law  of energy   $$\langle[\delta \bv_{L}(\bm{r})]^{3} \rangle=-\f45\epsilon \bm{r},$$
 for  incompressible flow in turbulence. Another famous  scaling law of energy is Yaglom four-thirds type
$$\langle \delta \bv_{L}(\bm{r})[\delta \bv (\bm{r})]^{2}  \rangle=-\f43\epsilon \bm{r}
$$in incompressible fluid. Helicity is the
second quadratic conserved quantity in the Euler equations. Its 4/3 law is governed by
 $$
 \langle\delta \bv_{L}({\bm r})[\delta \bv(\bm{r})\cdot\delta \bomega(\bm{r})] \rangle -\f12\langle\delta \bomega_{L}({\bm r})[\delta \bv({\bm r})]^{2}  \rangle=-\f43\epsilon_{H} \bm{r},
 $$
which can be  found in \cite{[GPP]}. Moreover, there also exists 2/15 law of helicity (see \cite{[Chkhetiani]}). A natural question is that weather  Kolmogorov type law is valid for the helicity. We consider this issue and invoke Eyink's  velocity decomposition in \cite{[Eyink1]} to establish  the local 4/5 law and 8/15 law for the helicity. In contrast to classical Kolmogorov law and
 Eyink's work \cite{[Eyink1]}, we have to handle  with the interaction between the velocity and vorticity.
 To overcome this difficulty, we first try to find the gap between $\f{\partial}{\partial_{\ell_k}}(\bn_i\bn_j)$ and $(\f{\partial \bn_i}{\partial\ell_j}+\f{\partial \bn_j}{\partial\ell_i})\bn_k$ involving different variable quantities (see \eqref{2.27}).
More  precisely, a new identity
\eqref{c26} is observed, which allows us to deal with the  interaction of different physical quantities. Finally, we deduce the exact law
$$
S_{HL} (\bv,\bomega)= -\f45 D_{H}(\bv,\bomega),$$
which corresponds to
$$\langle \delta \bv_{L} (\delta \bv_{L} \cdot\delta \bomega_{L} )  \rangle- \f12\langle \delta \bomega_{L} (\delta \bv_{L} )^{ 2}  \rangle  -\f25\langle  \delta\bomega_{L}(\delta\bv)^{2}\rangle + \f25\langle  \delta\bv_{L}(\delta\bv\cdot\delta\bomega)\rangle =-\f45\epsilon_{H} \bm{r}.$$
The observation \eqref{c26} helps us to revisit the
  4/5 laws due to Politano and  Pouquet   in \cite{[PP2]}
$$\ba
 \langle[\delta \bv_{L}({\bm r})]^{3} \rangle- 6\langle {\bm h}^{2}_{L}\delta \bv_{L}({\bm r})  \rangle=-\f45\epsilon_{E} \bm{r}, ~~\langle[\delta {\bm h }_{L}({\bm r})]^{3} \rangle- 6\langle {\bm h}^{2}_{L}\delta \bv_{T}({\bm r})  \rangle=-\f45\epsilon_{C} \bm{r},
\ea
$$ for the   total energy and  cross-helicity in
  magnetized fluids, where the velocity field couples the magnetic field.   The scaling law   plays an important role  in   plasma turbulence. We refer the reader to \cite{[AB],[MS]} for recent review in this direction.
 Here, we obtain that
  $$S_{EL}(\bv, \bm{h})=-\f45 D_{E}(\bv, \bm{h}), ~~
 S_{C L}(\bv, \bm{h})=-\f45 D_{CH}(\bv, \bm{h}),$$
 which correspond  to
 $$\ba
&\langle \delta \bv_{L}  (\delta \bv_{L})^{2}  \rangle+  \langle \delta \bv_{L} (\delta \bm{h}_{L} )^{ 2}  \rangle  -2\langle \delta \bm{h}_{L} (\delta \bm{h}_{L} \cdot\delta \bv_{L} )
\rangle-\f45\langle \delta \bm{h}_{L}(\delta \bm{h} \cdot\delta \bm{v} )  \rangle+\f45\langle \delta \bm{v}_{L}(\delta \bm{h} )^{2}  \rangle=-\f45\epsilon_{E} \bm{r},\\
&2\langle \delta \bv_{L} (\delta \bm{h}_{L} \cdot\delta \bv_{L} )
\rangle- \langle \delta \bm{h}_{L} (\delta \bm{h}_{L} )^{2}   \rangle-\langle \delta \bm{h}_{L} (\delta \bv_{L})^{2}\rangle-\f45\langle \delta \bm{h}_{L}  (\delta \bv )^{2}
\rangle+\f45\langle \delta \bm{v}_{L} ( \delta \bv\cdot \delta \bm{h}  )
\rangle=-\f45\epsilon_{C} \bm{r}.
\ea$$
It should be pointed out that an analogue of the above  four-fifths laws also hold for the inviscid MHD equations based on the
 Els\"asser variances. It is an interesting question to deduce these exact laws via the corresponding K\'arm\'an-Howarth type equations directly.

In summary, three new four-fifths laws for the helicity and, total energy, cross-helicity in the   incompressible Euler and inviscid MHD equations are deduced, respectively. 

\section*{Acknowledgement}

Ye was partially sponsored by National Natural Science Foundation of China (No. 11701145)
and Natural Science Foundation of Henan (No. 232300420111). Wang was partially supported by the National Natural Science Foundation of China under
grant (No. 11971446 and No. 12071113) and sponsored by Natural Science Foundation of
Henan (No. 232300421077).

\end{document}